\DeclareMathOperator{\E}{\mathbb{E}}
\DeclareMathOperator{\Pb}{\mathbb{P}}
\DeclareMathOperator{\R}{\mathbb{R}}
\DeclareMathOperator{\Var}{Var}
\renewcommand{\a}{\alpha}
\renewcommand{\b}{\beta}
\renewcommand{\l}{\lambda}
\newcommand{\g}{\gamma}
\newcommand{\s}{\sigma}
\newcommand{\z}{\zeta}
\newcommand{\De}{\Delta(\epsilon)}
\newcommand{\e}{\epsilon}
\newcommand{\sq}{\sqrt}
\newcommand{\f}[2]{\frac{#1}{#2}}
\newcommand{\eqd}{\stackrel{\textrm{d}}{=}}
\theoremstyle{theorem}
\newtheorem{theorem}{Theorem}[section]
\newtheorem{lemma}[theorem]{Lemma}
\newtheorem{proposition}[theorem]{Proposition}
\newtheorem{corollary}[theorem]{Corollary}
\theoremstyle{definition}
\newtheorem{remark}[theorem]{Remark}
\newtheorem{example}[theorem]{Example}
\newcommand{\pushright}[1]{\ifmeasuring@#1\else\omit\hfill$\displaystyle#1$\fi\ignorespaces}
\title{A Tandem Fluid Network with L\'evy Input\\ in Heavy Traffic}
\author[1]{D.T. Koops}
\author[2]{O.J. Boxma}
\author[1]{M.R.H. Mandjes}
\affil[1]{Korteweg-de Vries Institute, University of Amsterdam}
\affil[2]{Eurandom and Department of Mathematics and Computer Science, Eindhoven University of Technology}
\date{\today}
\newcommand{\vb}{\vspace{3mm}}
\begin{document}
\maketitle

\begin{abstract}\noindent
In this paper we study the stationary workload distribution of a fluid tandem queue in heavy traffic. We consider different types of L\'evy input, covering compound Poisson, $\a$-stable L\'evy motion (with $1<\a<2$), and Brownian motion. In our analysis we separately deal with   
L\'evy input processes with increments that  have finite and infinite variance. A distinguishing feature of this paper is that we do not only consider the usual heavy-traffic regime, in which the load at one of the nodes goes to unity, but also a regime in which we {\it simultaneously} let the load of both servers tend to one, which,  as it turns out, leads to entirely different heavy-traffic asymptotics. Numerical experiments indicate that under specific conditions the resulting simultaneous heavy-traffic approximation significantly outperforms the usual heavy-traffic approximation.  
\end{abstract}

\section{Introduction}
In this paper we study a fluid tandem queue {that consists of two servers in series}. A continuous-time stochastic input process, namely spectrally-positive L\'evy input, {feeds into} the first queue (also: upstream queue). The first server empties the upstream queue  at a deterministic rate $r_1$, immediately feeding the second (also: downstream) queue. The downstream server leaks at some deterministic rate {$r_2$; to make the system non-trivial we throughout assume $r_2<r_1$}. We are interested in the stationary workloads in both queues  {in  heavy-traffic regimes that we specify below}.

The heavy-traffic regime was first considered in \cite{Kingman}: one lets the load of the system tend to one, while simultaneously scaling the workload {in such a way} that a non-degenerate limiting distribution is obtained. Kingman's approach was mainly based on manipulating Laplace-Stieltjes transforms; {this approach we also follow in our paper}. {Another approach relies on the  functional central limit theorem in combination with the continuous mapping theorem, see e.g.\   \cite{pro}}. In \cite{shwa} both approaches are compared, and {the traditional heavy-traffic results, which assume the increments of the input process to have a finite variance, are generalized to the infinite-variance case}. For {excellent surveys} we refer to \cite{gly} and the book \cite{whwa}.

Tandem queueing systems in which both queues are experiencing heavy-traffic conditions have been studied before. \cite{HAR} has {focused on the classical setting} of a GI/G/1-type tandem in which discrete entities (`customers') arrive to the first queue, leave this queue after their service has been completed, and then join a second queue, in which they undergo service as well. In {such} queueing systems the correlation between both queues is typically {\it negative}, as the first queue being relatively large could be a consequence of long service times in that queue, which in turn correspond to long inter-arrival times in the second queue, and hence a relative small number of customers in the second queue. Harrison manages to quantify the resulting (negative) covariance between the populations in both queues in heavy traffic.  Importantly, in the fluid setting considered in our work this {reasoning} does {\it not} hold. {More specifically, for the types of models we study, the correlation between both workloads is {\it positive}: in our setting  large workloads in the upstream queue likely correspond to  large workloads in the downstream queue.}

\vb

Fluid tandem queues with spectrally-positive L\'evy input have been studied before, see for example \cite{kelwhi} and \cite{Kella1993}. The results concerning the joint distribution of the steady state of the workloads were generalized to a {more general class of queueing networks} in e.g.\ \cite{debdierol}. These results play an important role for our analysis, and are therefore summarized in Section \ref{app:LST}. A more extensive account of L\'evy-driven networks can be found {in Chapter 12 and 13 of \cite{Mandjes}. }

\vb 

The load of a server is defined as the average input rate into the server divided by its service rate. The load can thus be increased by increasing the average input rate, or lowering the service rate. In case of a single-node system, both methods are equivalent in the sense that they lead to the same heavy-traffic results. However, for multi-node systems (such as tandem queues), increasing the average input to the first server only leads to heavy traffic in the downstream server (recall that $r_1>r_2$). To be more general we therefore adapt the {\it service rates} appropriately, while keeping the input process fixed. Taking this approach opens up the possibility that the servers experience heavy traffic simultaneously. In this paper we study both types of heavy traffic, and refer to them as follows:
\begin{itemize}
\item Regime I: Only the downstream server has a load that tends to unity ({whereas the first queue does not operate under heavy traffic});
\item Regime II: The up- and downstream server have loads that {\it simultaneously} tend to unity.
\end{itemize}

\vb

{In general terms, the results we find for Regime I are much in line with those for heavy traffic in single queues, whereas for Regime II we obtain limiting distributions which, to the best of our knowledge, have not appeared before. More specifically, our contributions are:}
\begin{itemize}
\item
For Regime I we find that the steady-state distribution of the workload in the second queue is similar to the one of the first queue. Moreover,  the up- and downstream workloads are asymptotically independent in the heavy-traffic limit. 
\item
{In Regime II, we establish the interesting feature that the workloads do not decouple in heavy traffic, i.e., some dependence between the up- and downstream workloads remains}. Moreover, the marginal steady-state distribution of the downstream queue is crucially different from the one obtained in Regime I. {This has practical implications: as  verified through a set of experiments, Regime II approximations tend to outperform those based on Regime I, particularly} when the load of both servers is large.
\end{itemize}

\vb

We find that, as in the single-server case, there is a dichotomy between {input processes that have increments with finite and infinite variance}; {as a consequence, they have to be dealt with separately}. {We have derived Regime I results in both cases, and for the case of finite variance
we have also succeeded in addressing the technically more demanding Regime II.}

{In Regime I we prove that the stationary workload of the downstream queue has an exponential distribution (for the case of finite variance) or Mittag-Leffler distribution (for infinite variance). Remarkably, the same distributions (up to some factor) were found for single queues;} apparently, the fact that there is a server that modifies the input process  hardly affects the limiting distribution. {In addition, similar results were also found for waiting times in the corresponding GI/G/1 queues; see e.g.\ \cite{Boxma1999} for the case of infinite variance.}

\vb

The paper is organized as follows. In Section \ref{sec:Levy queues} we introduce our framework of queueing models with L\'evy input;  we subsequently explain the fluid L\'evy tandem queueing model that we consider, and recall  results that play a key role throughout the paper. As mentioned above,  there is a dichotomy between the case of finite (Section \ref{sec:finvar}) and infinite variance (Section \ref{sec:infvar}).   
In Section \ref{sec:finvar}  we first consider Brownian input, for which all computations can be done explicitly, 
and then turn to  general spectrally-positive L\'evy input; the section also present numerical experiments that indicate that the Regime II approximation typically outperforms the Regime I approximation. Section \ref{sec:infvar}, which focuses on infinite variance input, covers results for compound Poisson input and $\a$-stable input. Finally, in Appendix \ref{app:rv} we state Tauberian theorems, that are used in Section \ref{sec:infvar}.

\section{L\'evy driven queues}
\label{sec:Levy queues}
Let $(\Omega,\mathcal{F},\mathbb{F},\Pb)$ be a filtered probability space, where $\mathcal{F}=\mathcal{F}_T$ and the filtration $\mathbb{F}=\{\mathcal{F}_t:t\geq0\}$ satisfies the usual conditions. Let $T\in[0,\infty]$ denote the time horizon, which is allowed to be infinite. We start by providing a short introduction to single-node L\'evy-driven queues; for more details, see e.g.\ the exposition in \cite{Mandjes}.

To define single-node L\'evy-driven queues, we first switch to a discrete-time setting. Let $\{Q_n, n\in\mathbb{N}\}$ denote the workload process, where $Q_n$ is the workload at the beginning of the $n$-th timeslot. Let $Y_n$ denote the net input in the queue during the $n$-th timeslot. Then the discrete-time queue can be described through the well-known Lindley equation
\[
Q_{n+1} = \max\{ Q_n + Y_n, 0\}.
\]
By iterating this recursion and defining $X_n := \sum_{i=0}^n Y_i$, this eventually leads to 
\begin{equation}
\label{eq:discr}
Q_n = X_n + \max\left\{x, \max_{0\leq i\leq n} -X_i\right\},\quad n\in\mathbb{N},
\end{equation}
with initial workload $Q_0=x\geq0$. 
{Single-node L\'evy-driven queues, denoted by $\{Q_t, t\in\mathbb{R}_{\geq0}\}$, can be seen as the continuous-time analogue of Equation \eqref{eq:discr}:}
\[
Q_t = X_t + \max\{x,L_t\}, \quad t\geq0,
\]
with 
\[
L_t := \sup_{0\leq s \leq t} - X_s = - \inf_{0\leq s \leq t} X_s.
\]

\subsection{A fluid tandem queueing model}
\label{sec:model}

Suppose we have a L\'evy driven fluid tandem queue consisting of two servers. The input process $J=\{J_t, t\geq0\}$ feeds the first server (upstream server). The workload from the first server then flows continuously, at a fixed rate $r_1$, to the second server (downstream server). The downstream server empties itself at a fixed rate $r_2$. Consider Figure \ref{fig:diagram} for a diagram of this model and consider Figure \ref{fig:sample} for a typical sample path when the arrival process is a renewal process. {Assume $r_2<r_1$, as otherwise the second queue would remain empty}.  We  use two different parametrizations in this paper. In Regime I we parametrize 
\[
r_1=\E J_1 + r,\quad \text{for some fixed $r>0$, and $r_2 = \E J_1 + \e.$}
\]
For Regime II, we take
\[
r_1=\E J_1 + \g\e \quad\text{and}\quad r_2=\E J_1 + \e,\quad\text{in which $\g>1$ to guarantee that $r_1>r_2$.}
\]

\begin{figure}
    \begin{center}
        \begin{tikzpicture}[>=latex]
            \draw (0,0) -- ++(1cm,0) -- ++(0,-1.5cm) -- ++(-1cm,0);
            
            \draw (1.75cm,-0.75cm) circle [radius=0.75cm];
            
            \node[align=center] at (3.13,-0.4) {$r_1$};
            \draw[->] (2.5,-0.75) -- +(1.5cm,0);
            \draw[<-] (0,-0.75) -- +(-1.5cm,0) node[left] {$\{J_t,t\geq0\}$};
            \node[align=center] at (1.1cm,-2cm) {\textit{Upstream server}};
            
            \draw (4,0) -- ++(1cm,0) -- ++(0,-1.5cm) -- ++(-1cm,0);
            
            \draw (5.75cm,-0.75cm) circle [radius=0.75cm];
            
            \draw[->] (6.5,-0.75) -- +(1.5cm,0);
            
            \node[align=center] at (7.13,-0.4) {$r_2$};
            \node[align=center] at (5.2cm,-2cm) {\textit{Downstream server}};
            \end{tikzpicture}
        \caption{\textit{A diagram of the fluid tandem queueing system that we consider in this paper.}}
        \label{fig:diagram}
    \end{center}
\vspace{-21pt}
\end{figure}
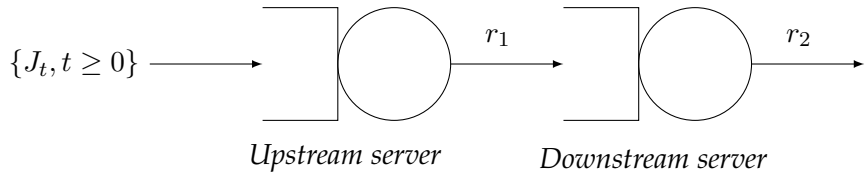

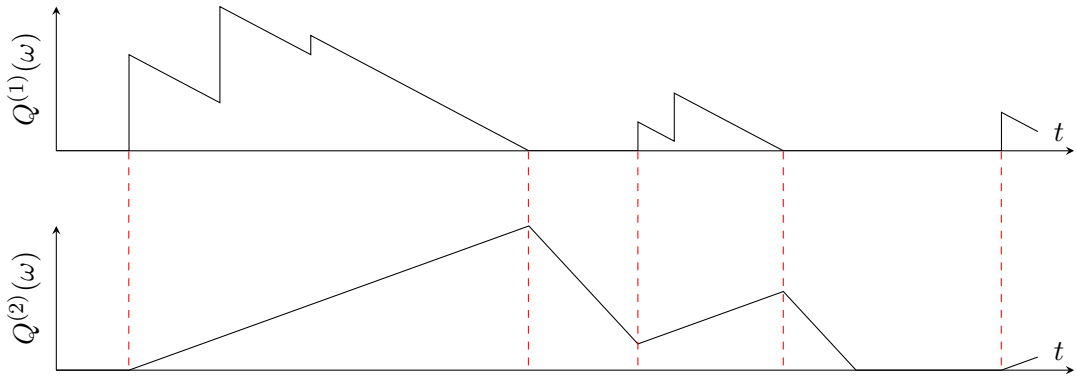
\begin{figure}[htbp]
\centering
    \begin{tikzpicture}
    \begin{groupplot}[group style={group size=1 by 2}, width=\textwidth, height=3.5cm]
        \nextgroupplot[
        xmin=0.8,
        xmax=3.6,
        ymin=0,
        xtick=\empty,
        ytick=\empty,
        ylabel=$Q^{(1)}(\omega)$,
        xlabel=$t$,
        axis lines=middle,
        ylabel absolute,
        ylabel style={yshift=-0.8cm}
        ]
    \addplot[color=black,solid]
     table[row sep=crcr]{%
    0 0\\
    1 0\\
    1 1\\
    1.25 0.5\\
    1.25 1.5\\
    1.5 1\\
    1.5 1.2\\
    2.1 0\\
    2.4 0\\
    2.4 0.3\\
    2.5 0.1\\
    2.5 0.6\\
    2.8 0\\
    3 0\\
    3.4 0\\
    3.4 0.4\\ 
    3.5 0.2\\
    };
    \nextgroupplot[
        xmin=0.8,
        xmax=3.6,
        xtick=\empty,
        ymin=0,
        ytick=\empty,
        ylabel=$Q^{(2)}(\omega)$,
        xlabel=$t$,
        axis lines=middle,
        ylabel absolute,
        ylabel style={yshift=-0.8cm}
    ]
    \addplot[color=black,solid]
     table[row sep=crcr]{%
    0 0\\
    1 0\\
    2.1 1.1\\
    2.4 0.2\\
    2.8 0.6\\
    3 0\\
    3.4 0\\
    3.5 0.1\\
    };
    \end{groupplot}
    
    \draw[red,dashed](0.966,-0.007) -- (0.966,-2.91);
    \draw[red,dashed](6.2785,-0.007) -- (6.2785,-2.91);
    \draw[red,dashed](7.734,-0.007) -- (7.734,-2.91);
    \draw[red,dashed](9.667,-0.007) -- (9.667,-2.91);
    \draw[red,dashed](12.5653,-0.007) -- (12.5653,-2.91);
    \end{tikzpicture}
\caption{\textit{A sample path for an arbitrary $\omega\in\Omega$ in the fluid tandem queue. During busy periods of the upstream queue, the downstream server fills up with a net rate of $r_1-r_2$. During idle periods the workload of the downstream server decreases with rate $r_2$.}}
\label{fig:sample}
\end{figure}

In Regime I, the upstream server will have a fixed load of $\rho_1 = {\E J_1}/{r_1}<1$ as $\e\downarrow0$, whereas the load of the downstream server will tend to one: $\rho_2={\E J_1}/{r_2}\uparrow1$ as $\e\downarrow0$. In Regime II, on the contrary, {\it both} the up- and downstream server will have loads that tend to one: $\rho_1,\rho_2\uparrow1$ as $\e\downarrow0$. To avoid the workload from increasing indefinitely, we scale the workloads so as to obtain a non-degenerate limit. Only the queues for which the load is increasing require an appropriate scaling. The specific way in which the workloads should be scaled depends on the type of input {(more specifically, it matters whether the increments have finite variance or not)}; this will {be pointed out in detail later in the paper.}

We now introduce some additional notation. We define, {for $i=1,2$, $X_t^{(i)}=
J_t - r_i t,$} with {$J_t$ in  the class of spectrally-positive L\'evy processes $\mathcal{S}^+$.}
We denote {by} $\phi$ the {\it Laplace exponent} 
\[\phi(\a) = \log \E[e^{-\a X_1^{(1)}}],\]
and the inverse function of $\phi$ by $\phi^{-1}\equiv\psi$. {To ensure} stability, it is required that the average input {rate} is less than the speed of the slowest working server, i.e., $\E J_1 < r_2 $.

\subsection{Useful results on transforms}
\label{app:LST}
We denote the steady state workload of queue $i\in\{1,2\}$ by $Q^{(i)}$. {The theorems stated below, which uniquey  characterize the distributions of the $Q^{(i)}$, play a crucial role} throughout the paper. The following assertions are Thms.\ 3.2, 12.11, and  12.3, respectively, in the book \cite{Mandjes}, and were developed earlier in e.g.\  \cite{debdierol}. Thm.\  \ref{thm:GPK} gives the Laplace-Stieltjes tranform (LST) for the stationary workload if there is only one server, and can be considered to be 
a generalization of the {well-known} Pollaczek-Khinchine formula. The LST for the joint stationary workload in the fluid tandem system is presented in Thm.\ \ref{thm:joint}, which also provides us with the LST for the downstream queue only (Corollary \ref{thm:marginal}). 

\begin{theorem}[Generalized Pollaczek-Khinchine (PK)]
\label{thm:GPK}
Let $J\in\mathcal{S}^+$. For $ s\geq0$,
\[
\E e^{- s Q^{(1)}} = \f{ s \phi'(0)}{\phi( s)}.
\]
\end{theorem}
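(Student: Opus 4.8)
\emph{Proof proposal.} The plan is to derive the formula from the Kella--Whitt martingale for the reflected L\'evy process, combined with a drift (rate-conservation) identity.

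First I would collect the elementary properties of $\phi$ that make the statement meaningful. Since $J\in\mathcal{S}^+$ has only upward jumps, $\E e^{-\a J_1}<\infty$ for every $\a\ge0$, so $\phi(\a)=\a r_1+\log\E e^{-\a J_1}$ is finite and convex on $[0,\infty)$ with $\phi(0)=0$ and $\phi'(0)=-\E X_1^{(1)}=r_1-\E J_1$, which is positive since $\E J_1<r_2<r_1$. Convexity then yields $\phi(\a)\ge\phi'(0)\,\a>0$ for $\a>0$, so the right-hand side is a genuine Laplace--Stieltjes transform of a probability law on $[0,\infty)$ once the identity is proved (at $s=0$ both sides equal $1$). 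Moreover, since $\E X_1^{(1)}<0$ the process $X^{(1)}$ drifts to $-\infty$, so $\sup_{t\ge0}X_t^{(1)}<\infty$ almost surely; by the time-reversal of the representation $Q_t=X_t^{(1)}-\inf_{0\le s\le t}X_s^{(1)}$ (started empty) one has $Q^{(1)}\eqd\sup_{t\ge0}X_t^{(1)}$, which in particular shows that the stationary distribution exists and is the object whose transform we compute.

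Next I would run the reflected process $Q=\{Q_t\}$ from its stationary law and apply the Kella--Whitt martingale. Writing $Q_t=Q_0+X_t^{(1)}+L_t$ with $L$ nondecreasing and increasing only on the set $\{t:Q_t=0\}$, the process
\[
M_t=\phi(\a)\int_0^t e^{-\a Q_s}\,ds+e^{-\a Q_0}-e^{-\a Q_t}-\a\int_0^t e^{-\a Q_s}\,dL_s
\]
is a zero-mean martingale. Because $dL_s$ is carried by $\{s:Q_s=0\}$, the last integral is simply $L_t$. Taking expectations and using $\E M_t=0$, stationarity ($\E e^{-\a Q_0}=\E e^{-\a Q_t}$), and Fubini ($\E\int_0^t e^{-\a Q_s}\,ds=t\,\E e^{-\a Q^{(1)}}$), the boundary terms cancel and we are left with $0=\phi(\a)\,t\,\E e^{-\a Q^{(1)}}-\a\,\E L_t$.

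Finally I would evaluate $\E L_t$ by a drift balance: taking expectations in $Q_t=Q_0+X_t^{(1)}+L_t$ in stationarity gives $\E L_t=-\E X_t^{(1)}=t\,\phi'(0)$ when $\E Q^{(1)}<\infty$, and in general the identity persists (for instance because $L_t/t\to\phi'(0)$ almost surely and in $L^1$, or by invoking the rate-conservation law directly). Substituting and dividing by $t$ yields $\phi(\a)\,\E e^{-\a Q^{(1)}}=\a\,\phi'(0)$, which is the claimed formula after renaming $\a$ as $s$. I expect the real work to lie in the two technical ingredients rather than the algebra: justifying that $M$ is a true (not merely local) martingale, which requires an integrability check in the Dynkin/It\^o argument underlying the Kella--Whitt identity, and pinning down $\E L_t$ without presupposing integrability of $Q^{(1)}$. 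Both are routine but must be handled with care; alternatively one can bypass them entirely by computing the law of $\sup_{t\ge0}X_t^{(1)}$ for a spectrally-positive L\'evy process via the Wiener--Hopf factorisation, which is essentially the route of the references cited before the theorem.
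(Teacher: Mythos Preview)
Your argument via the Kella--Whitt martingale is correct and is in fact one of the standard derivations of this identity; the technical caveats you flag (true-martingale property, identification of $\E L_t$) are the right ones, and your handling of them is adequate for a proof sketch. Note, however, that the paper does not actually prove Theorem~\ref{thm:GPK}: it is quoted without proof as a known result (Theorem~3.2 in the cited monograph), so there is no ``paper's own proof'' to compare against. Your Kella--Whitt route is essentially the proof given in that reference, so you are not so much taking a different approach as supplying the argument the paper chose to import.
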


\begin{theorem}[Two-dimensional PK for fluid tandem]
\label{thm:joint}
Let $J\in\mathcal{S}^+$. For $ s_1,s_2\geq0$,
\[
\E e^{- s_1 Q^{(1)} - s_2 Q^{(2)}}  = \f{-\E X_1^{(2)} s_2}{s_2-\psi(s_2(r_1-r_2))}\f{\psi(s_2(r_1-r_2))- s_1}{(r_1-r_2)s_2-\phi( s_1)}.
\]
\end{theorem}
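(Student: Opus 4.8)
The plan is to obtain the joint transform from a (two-dimensional) Kella--Whitt martingale for the workload vector, combined with stationarity and two structural features of the tandem: each queue's regulator increases only on the set where that queue is empty, and the downstream queue can be empty only when the upstream queue is empty.

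\emph{Pathwise representation.} Write $X^{(1)}_t=J_t-r_1t$ and $L^{(1)}_t=-\inf_{0\le s\le t}X^{(1)}_s$, so that $Q^{(1)}_t=X^{(1)}_t+L^{(1)}_t$ is the usual reflection. The cumulative output of the first server is $J_t-Q^{(1)}_t$, so the net input into the second queue over $[0,t]$ equals $\tilde X_t:=(J_t-Q^{(1)}_t)-r_2t=(r_1-r_2)t-L^{(1)}_t$, and $Q^{(2)}_t=\tilde X_t+L^{(2)}_t$ with regulator $L^{(2)}_t=-\inf_{0\le s\le t}\tilde X_s$. Here $L^{(1)},L^{(2)}$ are continuous and non-decreasing and increase only on $\{Q^{(1)}=0\}$, resp.\ $\{Q^{(2)}=0\}$; moreover the downstream workload strictly increases throughout busy periods of the upstream queue (as $r_1>r_2$), so it can vanish only while the upstream queue is empty, i.e.\ $\{Q^{(2)}_t=0\}\subseteq\{Q^{(1)}_t=0\}$.

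\emph{The martingale identity.} Fix $s_1,s_2\ge0$ and set $W_t:=s_1Q^{(1)}_t+s_2Q^{(2)}_t$. Then $W_t=V_t+Y_t$ with $V_t:=s_1X^{(1)}_t+s_2(r_1-r_2)t$ a spectrally-positive L\'evy process having $\log\E e^{-V_1}=\phi(s_1)-(r_1-r_2)s_2=:\eta$, and $Y_t:=(s_1-s_2)L^{(1)}_t+s_2L^{(2)}_t$ continuous and of locally bounded variation. By the Kella--Whitt martingale (see \cite{kelwhi,Mandjes}), $e^{-W_0}-e^{-W_t}+\eta\int_0^te^{-W_s}\,ds-\int_0^te^{-W_s}\,dY_s$ is a zero-mean martingale. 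Taking $(Q^{(1)}_0,Q^{(2)}_0)\eqd(Q^{(1)}_t,Q^{(2)}_t)\eqd(Q^{(1)},Q^{(2)})$ stationary makes the boundary terms cancel in expectation; dividing by $t$, letting $t\to\infty$, and using that $Q^{(1)}_s=0$ on $\mathrm{supp}\,dL^{(1)}$ and that, by the nesting above, $e^{-s_1Q^{(1)}_s}=1$ on $\mathrm{supp}\,dL^{(2)}$, one gets
\[
\eta\,\E e^{-s_1Q^{(1)}-s_2Q^{(2)}}=(s_1-s_2)\,\beta(s_2)+s_2\,\gamma,
\]
with $\beta(s_2):=\lim_{t\to\infty}\tfrac1t\,\E\int_0^te^{-s_2Q^{(2)}_s}\,dL^{(1)}_s$ and $\gamma:=\lim_{t\to\infty}\tfrac1t\,\E L^{(2)}_t$. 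A rate-conservation argument fixes $\gamma$: since $r_2t-L^{(2)}_t$ is the cumulative output of queue $2$, whose long-run rate in steady state is $\E J_1$, we get $\gamma=r_2-\E J_1=-\E X^{(2)}_1$, and note that $\gamma$ and $\beta(s_2)$ do not depend on $s_1$.

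\emph{Identifying $\beta$ and concluding.} Exploiting that $\beta(s_2)$ is free of $s_1$, specialise to $s_1=\psi\big(s_2(r_1-r_2)\big)$, which forces $\phi(s_1)=s_2(r_1-r_2)$, hence $\eta=0$; the identity collapses to $\big(\psi(s_2(r_1-r_2))-s_2\big)\beta(s_2)=s_2\,\E X^{(2)}_1$, i.e.\ $\beta(s_2)=-s_2\E X^{(2)}_1/\big(s_2-\psi(s_2(r_1-r_2))\big)$. Substituting $\beta(s_2)$ and $\gamma=-\E X^{(2)}_1$ back, and writing $\eta=-\big((r_1-r_2)s_2-\phi(s_1)\big)$, a short simplification gives the claimed product formula; as a check, $s_2\downarrow0$ (using $\psi(s_2(r_1-r_2))/s_2\to(r_1-r_2)/\phi'(0)$) recovers Theorem~\ref{thm:GPK}. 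I expect the main obstacle to be the rigorous $t\to\infty$ passage in the Kella--Whitt identity — existence and $L^1$-convergence of the time-averaged Stieltjes integrals against $dL^{(1)}$ and $dL^{(2)}$, with the integrability this requires, which for infinite-variance input needs a truncation argument or the constructions of \cite{debdierol}; the algebraic and structural steps are routine.
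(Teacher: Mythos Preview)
The paper does not prove Theorem~\ref{thm:joint}; it merely cites it as Thm.~12.11 in \cite{Mandjes} (and the antecedent \cite{kelwhi}, \cite{debdierol}). Your argument is precisely the Kella--Whitt martingale derivation underlying those references: form $W_t=s_1Q^{(1)}_t+s_2Q^{(2)}_t$, split off the L\'evy part with exponent $\eta=\phi(s_1)-(r_1-r_2)s_2$, use that $dL^{(1)}$ is carried by $\{Q^{(1)}=0\}$ and $dL^{(2)}$ by $\{Q^{(2)}=0\}\subseteq\{Q^{(1)}=0\}$, and kill $\eta$ via $s_1=\psi(s_2(r_1-r_2))$ to identify $\beta(s_2)$. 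The algebra you give is correct and collapses exactly to the stated product formula.

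Two minor remarks. First, in the pathwise paragraph you write $L^{(1)}_t=-\inf_{0\le s\le t}X^{(1)}_s$, which is the regulator starting from $Q^{(1)}_0=0$; under stationary initial conditions the regulator is $(-\inf_{0\le s\le t}(Q^{(1)}_0+X^{(1)}_s))^+$, but this does not affect the martingale step, where only $dL^{(i)}$ and the support property are used. Second, the ``main obstacle'' you flag (passing to the time-averaged limit) is genuinely the only delicate point, and the cited sources handle it via stationarity and the fact that the integrands are bounded by~$1$; so your caveat is well placed but not hard to discharge.
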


\begin{corollary}[One-dimensional PK for fluid tandem]
\label{thm:marginal}
Let $J\in\mathcal{S}^+$. For $ s\geq0$,
\[
\E e^{- s Q^{(2)}} = \f{-\E X_1^{(2)}}{r_1-r_2} \f{\psi( s(r_1-r_2))}{ s-\psi( s(r_1-r_2))}.
\]
\end{corollary}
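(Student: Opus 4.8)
The plan is to obtain the one-dimensional transform directly from the two-dimensional Pollaczek--Khinchine formula of Theorem \ref{thm:joint} by specializing the first argument to zero. Since $\E e^{-sQ^{(2)}}=\E e^{-0\cdot Q^{(1)}-s Q^{(2)}}$, I would simply substitute $s_1=0$ and $s_2=s$ into the right-hand side of the identity in Theorem \ref{thm:joint} and simplify.

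The only quantity that requires a moment's thought is $\phi(0)$: by definition $\phi(0)=\log\E[e^{-0\cdot X_1^{(1)}}]=\log 1=0$, so the second fraction in Theorem \ref{thm:joint} collapses to $\psi(s(r_1-r_2))/((r_1-r_2)s)$. In particular no $0/0$ indeterminacy arises when setting $s_1=0$, because $r_1>r_2$ forces the denominator $(r_1-r_2)s$ to be strictly positive for $s>0$. Multiplying this by the first fraction $-\E X_1^{(2)}s/\!\left(s-\psi(s(r_1-r_2))\right)$ and cancelling the common factor $s$ produces exactly
\[
\E e^{-sQ^{(2)}}=\f{-\E X_1^{(2)}}{r_1-r_2}\,\f{\psi(s(r_1-r_2))}{s-\psi(s(r_1-r_2))},
\]
which is the asserted formula, valid for all $s>0$.

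To cover $s=0$ as well (so that the statement holds on all of $[0,\infty)$), I would invoke right-continuity of $s\mapsto\E e^{-sQ^{(2)}}$ at the origin, where the left-hand side equals $1$; a short check confirms the right-hand side tends to $1$ as $s\downarrow0$, using $\psi(0)=0$, $\psi'(0)=1/\phi'(0)$ together with $\phi'(0)=r_1-\E J_1$ and $-\E X_1^{(2)}=r_2-\E J_1$. I do not anticipate any genuine obstacle: the corollary is an immediate specialization of Theorem \ref{thm:joint}, and the only (very minor) point to verify is precisely that the substitution $s_1=0$ is regular rather than indeterminate.
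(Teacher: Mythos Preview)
Your proposal is correct and is exactly the intended derivation: the paper states the result as an immediate corollary of Theorem~\ref{thm:joint} without further proof, and your substitution $s_1=0$, $s_2=s$ together with $\phi(0)=0$ is precisely what yields it. The additional check at $s=0$ via $\psi'(0)=1/\phi'(0)$ is a nice touch but not something the paper spells out.
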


\begin{remark}
Throughout the remainder of the paper, we assume $J\in\mathcal{S}^+$ and $\E|J_1|<\infty$. It is straightforward to extend our results to spectrally negative input processes $J\in\mathcal{S}^-$, by making use of Laplace-Stieltjes transforms for $\mathcal{S}^-$-processes, which can be found {in e.g.\ Thm.\ 12.12 of \cite{Mandjes}.} 
\end{remark}

\section{Input processes with finite variance}
\label{sec:finvar}
In this section we consider the fluid tandem queue for various types of input processes {that have increments with finite variance}. {Since for Brownian input an explicit analysis can be performed, we  consider this case first (Section \ref{sec:BM}).} Using appropriate expansions, we show in Section \ref{sec:geninput} how these results extend to spectrally-positive L\'evy processes. In both cases we {establish} Regime I and Regime II results. Finally, in Section \ref{sec:numapp} we {provide a numerical comparison} between the Regime I and Regime II approximations. 

\subsection{Brownian input}
\label{sec:BM}
Assume that the input is Brownian, that is, $J_t = \mu t + \sigma W_t$, where $W$ denotes a standard Brownian motion. Then we have
\[
\phi(s) =\log \E[e^{- s J_1}] =\log \E e^{- s(\s W_1 - r)} = \f12\s^2 s^2+ r s,
\]
and after some elementary algebra we find that the inverse is given by 
\begin{equation}
\label{eq:psi}
\psi(s) = -\f{r}{\s^2}+\f1\s\sqrt{\f{r^2}{\s^2}+2s}.
\end{equation}

\subsubsection*{Regime I}
In this case the upstream server has a fixed load $\rho_1<1$ and the downstream server has a load that {tends} to one. Therefore we have to scale the workload of the downstream server: {to obtain a non-degenerate limit, we scale by $\e$}. Relying on Thm.\ \ref{thm:joint}, 
\begin{eqnarray}
\label{eq:2dimlst}
\E e^{- s_1 Q^{(1)} -  s_2 \e Q^{(2)}} &=&\f{\e^2  s_2 }{\e s_2 -\psi(\e s_2 ( r-\e))} \f{\psi(\e s_2 ( r-\e))- s_1}{( r-\e) s_2 \e-\phi( s_1)}\\ \nonumber
&=&\f{1}{( r-\e) s_2 \e- s_1 r-\f12 s_1^2\s^2}\cdot\\
&&\f{(\e s_2 - s_1)\f{1}{\s}\sq{\f{ r^2}{\s^2}+2 s_2 \e( r-\e)}- s_1 s_2 \e-\f{ s_1 r}{\s^2}+\f{ s_2 \e( r-\e)}{\s^2}}{ s_2 +\f{2}{\s^2}},\nonumber
\end{eqnarray}
yielding the following proposition.
\begin{proposition}
Suppose that the input process is a Brownian motion. Then, in Regime~I, the joint stationary workload in heavy traffic is given by
\begin{equation} 
\label{eq:osq}
\E e^{- s_1 Q^{(1)} -  s_2 \e Q^{(2)}} \stackrel{\e\downarrow0}{\longrightarrow}  \f{2 r/\s^2}{2 r/\s^2+ s_1} \f{2/\s^2}{2/\s^2+ s_2 }.
\end{equation}
\end{proposition}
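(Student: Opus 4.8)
The plan is to start from the exact two-dimensional LST expression in \eqref{eq:2dimlst}, which has already been written out explicitly for Brownian input, and simply take the limit $\e\downarrow0$ term by term. The key observation is that the expression factors (after the algebra displayed in \eqref{eq:2dimlst}) into a denominator $(r-\e)s_2\e - s_1 r - \tfrac12 s_1^2\s^2$ and a second fraction whose denominator is $s_2 + 2/\s^2$. So the first thing I would do is identify the limits of the three ``moving'' pieces: the prefactor denominator, the square-root term, and the numerator of the second fraction.

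First I would handle the prefactor denominator: as $\e\downarrow0$, the term $(r-\e)s_2\e\to 0$, so $(r-\e)s_2\e - s_1 r - \tfrac12 s_1^2\s^2 \to -(s_1 r + \tfrac12 s_1^2\s^2) = -s_1(r + \tfrac12 s_1\s^2)$. Next, in the numerator of the second fraction, the square root $\tfrac1\s\sqrt{r^2/\s^2 + 2 s_2\e(r-\e)}$ converges to $\tfrac1\s\cdot\tfrac{r}{\s} = r/\s^2$ since the $2s_2\e(r-\e)$ term vanishes; meanwhile $(\e s_2 - s_1)\to -s_1$, the cross term $s_1 s_2\e\to 0$, and $s_2\e(r-\e)/\s^2\to 0$. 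Hence the numerator of the second fraction tends to $-s_1\cdot r/\s^2 - s_1 r/\s^2 = -2 s_1 r/\s^2$. Putting the pieces together:
\[
\E e^{- s_1 Q^{(1)} - s_2\e Q^{(2)}} \;\xrightarrow{\e\downarrow0}\; \f{1}{-s_1(r + \tfrac12 s_1\s^2)}\cdot\f{-2 s_1 r/\s^2}{s_2 + 2/\s^2}.
\]
Then I would simplify: the $-s_1$ cancels, leaving $\dfrac{2r/\s^2}{r + \tfrac12 s_1\s^2}\cdot\dfrac{1}{s_2 + 2/\s^2}$, and multiplying numerator and denominator of the first factor by $2/\s^2$ gives $\dfrac{2r/\s^2}{2r/\s^2 + s_1}$, while multiplying the second factor top and bottom by $1$ already has the desired form $\dfrac{2/\s^2}{2/\s^2 + s_2}$. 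This is exactly \eqref{eq:osq}.

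I would then add two short remarks to make the argument rigorous. One is that the convergence is pointwise for each fixed $s_1, s_2 \ge 0$, and since all limiting factors are genuine Laplace transforms of (in fact exponential) distributions, L\'evy's continuity theorem for LSTs justifies reading off the limiting \emph{distribution}: $Q^{(1)}$ converges to an exponential law with rate $2r/\s^2$ and $\e Q^{(2)}$ to an independent exponential law with rate $2/\s^2$. The other is the one genuinely non-routine point: one must check that no cancellation of the apparent singularity at $s_1 = 0$ is being swept under the rug — both the prefactor denominator and the second-fraction numerator carry a factor $s_1$, and it is precisely this common factor that cancels to yield a finite, nonzero limit. This cancellation is the main thing to verify carefully; beyond it, the proof is just bookkeeping of which terms are $O(\e)$ and which are $O(1)$, so I do not expect any serious obstacle.
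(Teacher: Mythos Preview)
Your proposal is correct and follows exactly the paper's approach: the paper simply displays the explicit two-factor form of the LST in \eqref{eq:2dimlst} and states that the proposition follows, whereas you have written out the term-by-term limit in detail. One cosmetic point: your description of the final simplification (``multiplying numerator and denominator of the first factor by $2/\s^2$'') is phrased imprecisely --- what actually happens is that you divide the first denominator by $\s^2/2$ and absorb that factor as the $2/\s^2$ in the numerator of the second fraction --- but the end result is correct.
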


In particular, this implies that the distribution of $\e Q^{(2)}$ converges to an exponential distribution with rate $2/\s^2$, which equals the distribution of the total workload. {Remarkably, $Q^{(1)}$ and $\e Q^{(2)}$ turn out to be asymptotically independent in the limit $\e\downarrow0$.}

Although this asymptotic independence is {an interesting finding} from a theoretical point of view, it has the intrinsic drawback that the original dependency structure is lost. {Another drawback of} this approximation, is that it leads to significant errors if $\rho_1$ is large as well, as will be illustrated in Section \ref{sec:numapp}. This prompts us to consider Regime II. 

\subsubsection*{Regime II}

In this regime we scale both workloads, and we choose the service rates as explained in~Section \ref{sec:model}. Thus, we take $r=\g\e$ in Eqn.\ \eqref{eq:psi}, so as to obtain
\begin{equation}
\label{eq:psi2}
\psi(s) = -\f{\g\e}{\s^2} + \f1\s\sq{\f{\g^2\e^2}{\s^2}+2 s}.
\end{equation}
By Thm.\ \ref{thm:joint},
\[
\E e^{- s_1\e Q^{(1)}-s_2\e Q^{(2)}} = 
\f{-\E X_1^{(2)}s_2\e}{s_2\e-\psi(s_2\e(r_1-r_2))}
\f{\psi(s_2\e(r_1-r_2))- s_1\e}{(r_1-r_2)s_2\e-\phi( s_1\e)}.
\]
Using Eqn.\ \eqref{eq:psi2} yields
\[
\E e^{- s_1 \e Q^{(1)}-s_2 \e Q^{(2)}} =\f{s_2}{s_2+\f{\g}{\s^2}-\f{1}{\s}\sq{\f{\g^2}{\s^2}+2(\g-1)s_2}} \f{-\f{\g}{\s^2}+\f{1}{\s}\sq{\f{\g^2}{\s^2}+2(\g-1)s_2}- s_1}{(\g-1)s_2-\g s_1-\f12 s_1^2\s^2},
\]
where it should be noted that the expression on the right-hand side {does not contain any $\e$ anymore}. This indicates that, for Brownian input, the joint distribution in the heavy-traffic limit is of the same type as the distribution for {`non-heavy-traffic loads'} loads $\rho_1$ and $\rho_2$. After further simplification we obtain
\begin{equation} 
\label{eq:joint}
\E e^{- s_1\e Q^{(1)}- s_2\e Q^{(2)}} =
\f{s_2(\g-2- s_1\s^2)- s_1\g+(s_2- s_1)\s\sqrt{\f{\g^2}{\s^2}+2(\g-1)s_2}}{(s_2\s^2+2)((\g-1)s_2-\g s_1-\frac{1}{2} s_1^2\s^2)}.
\end{equation}
We can find the marginal distributions of the stationary workload of the first and second queue by plugging in $s_2=0$, resp.\ $s_1=0$. This yields 
\begin{equation}
\label{eq:LSTQ1}
\E e^{- s\e Q^{(1)}}= \f{2\g/\s^2}{2\g/\s^2+ s},
\end{equation}
\begin{equation}
\label{eq:LSTQ2}
\E e^{- s \e Q^{(2)}} = \f{1}{\g-1}   \f{-2+\g+\sq{\g^2+2 s\s^2(\g-1)}}{2+ s\s^2}.
\end{equation}

After lengthy but elementary calculations we obtain
\[
\E[ Q^{(1)} Q^{(2)}] = \lim_{ s_1\downarrow0}\lim_{s_2\downarrow0} \frac{\partial^2}{\partial s_1\partial s_2} \E e^{- s_1 Q^{(1)}-s_2 Q^{(2)}} = \f{\g^2-1}{4\g^3} \s^4.
\]
{Using that $\E[Q^{(1)}]= \frac{1}{2}{\s^2}/{\g}$ and $\E[Q^{(2)}]= \f{1}{2}\s^2{(\g-1)}/{\g}$,}
\begin{equation}
\label{eq:corr}
\textrm{Cov}(Q^{(1)}, Q^{(2)}) =\s^4\Big(\f{\g^2-1}{4\g^3} - \f{\g-1}{4\g^2}\Big) = \f{\g-1}{4\g^3}\s^4.
\end{equation}
To calculate the correlation coefficient, we also {compute the variances}. Since $Q^{(1)}$ has an $\textrm{Exp}(2\g/\s^2)$ distribution, its variance is given by $\text{Var}(Q^{(1)})=\f{1}{4}{\s^4}/{\g^2}$. By making  use of the LST of $Q^{(2)}$, we also find 
\[
\text{Var}(Q^{(2)}) = \f{(\g-1)^2(\g+2)\s^4}{4\g^3}.
\] 
It now follows that the correlation coefficient is given by
\begin{equation}
\label{eq:cg}
\textrm{Corr}(Q^{(1)},Q^{(2)}) =c(\g)= \f{1}{\sqrt{\g(\g+2)}}.
\end{equation}

Observe that, when decreasing $\g$ from $\infty$ to $1$, $c(\g)$ increases from $0$ to ${1}/{\sqrt{3}}$. This result is in line with Corollary 4.1 in \cite{Kella1993}: there $c(\g)$ is studied without heavy traffic, and it is concluded that $c(\gamma)\in(0,1/\sqrt{3})$.
In the introduction we already argued why $c(\g)$ is anticipated to be positive, but it can also be seen that $c(\g)$ decreases in  $\g$. {Indeed, as $\g$ grows, the service rate in the upstream server increases. This implies that it becomes more likely that the downstream server has a large workload, while the workload in the first server may be relatively small due to its fast service.}

\subsection{General input}
\label{sec:geninput}
We now extend the results for the Brownian case in the previous section to spectrally-positive L\'evy input. Again we consider both regimes, starting with Regime I.

\label{sec:MG1}
\subsection*{Regime I}
In this section we prove the following main result.

\begin{proposition}
\label{prop:prop1}
Let the input process $J\in\mathcal{S}^+$ be such that $\Var J_1 = \s^2 <\infty$. Then, in Regime I, the stationary workloads of the up- and downstream queue are asymptotically independent, {with $Q^{(1)}$ given by Thm.\ \ref{thm:GPK}, and $Q^{(2)} \stackrel{\rm d}{=} {\rm Exp}(\f{2}{\sigma^2})$.}
\end{proposition}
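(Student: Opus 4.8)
# Proof Proposal

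The plan is to mimic the Brownian computation of the previous subsection, but now working with a general Laplace exponent $\phi$ rather than the explicit quadratic. The key input is Corollary \ref{thm:marginal}, which gives
\[
\E e^{-s Q^{(2)}} = \f{-\E X_1^{(2)}}{r_1-r_2}\,\f{\psi(s(r_1-r_2))}{s-\psi(s(r_1-r_2))},
\]
together with Thm.\ \ref{thm:joint} for the joint transform. In Regime I we have $r_1 - r_2 = r - \e$, $-\E X_1^{(2)} = r_2 - \E J_1 = \e$, and we scale the downstream queue by $\e$. So the object to analyze is $\E e^{-s_1 Q^{(1)} - s_2 \e Q^{(2)}}$ as $\e\downarrow 0$, and separately the marginal $\E e^{-s\e Q^{(2)}}$.

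First I would establish the local behaviour of $\psi$ near the origin. Since $\phi(\a) = \log\E e^{-\a X_1^{(1)}}$ with $X_1^{(1)} = J_1 - r_1$, we have $\phi(0)=0$, $\phi'(0) = -\E X_1^{(1)} = r_1 - \E J_1 = r - \e$, which tends to $r > 0$, and $\phi''(0) = \Var J_1 = \s^2$. Hence $\phi(\a) = (r-\e)\a + \tfrac12\s^2\a^2 + o(\a^2)$, and inverting gives the expansion $\psi(\z) = \tfrac{1}{r-\e}\z - \tfrac{\s^2}{2(r-\e)^3}\z^2 + o(\z^2)$ as $\z\downarrow 0$. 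This is the step that replaces the explicit formula \eqref{eq:psi}; it requires only that $J_1$ has a finite second moment, which is exactly the hypothesis $\Var J_1 = \s^2 < \infty$ (plus $\E|J_1|<\infty$ from the standing remark). The argument $s_2\e(r_1-r_2) = s_2\e(r-\e)$ fed into $\psi$ is of order $\e$, so this expansion is precisely what is needed.

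Next I would plug this into the marginal formula. Writing $u = s\e(r-\e)$, one has $\psi(u) = \tfrac{u}{r-\e}(1 - \tfrac{\s^2}{2(r-\e)^2}u + o(u))$, so $s\e - \psi(u) = s\e - s\e(1 - \tfrac{\s^2}{2(r-\e)^2}u + o(u)) = s\e\cdot\tfrac{\s^2}{2(r-\e)^2}u + o(\e^2)$, and after dividing numerator and denominator by appropriate powers of $\e$ the ratio $\psi(u)/(s\e - \psi(u))$ behaves like $\tfrac{2(r-\e)}{\s^2 s\e}$ up to $1+o(1)$. Multiplying by the prefactor $\tfrac{\e}{r-\e}$ yields $\E e^{-s\e Q^{(2)}} \to \tfrac{2/\s^2}{2/\s^2 + s}$, i.e.\ the claimed $\mathrm{Exp}(2/\s^2)$ limit. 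For the joint transform I would feed the same $\psi$-expansion into Thm.\ \ref{thm:joint}: the factor $\psi(s_2\e(r-\e)) - s_1 \to -s_1$ and $(r-\e)s_2\e - \phi(s_1) \to -\phi(s_1)$, so that the second fraction of Thm.\ \ref{thm:joint} converges to $\tfrac{-s_1}{-\phi(s_1)} = \tfrac{s_1}{\phi(s_1)}$; multiplying by $\phi'(0)=r$ gives exactly the generalized PK transform $\tfrac{s_1\phi'(0)}{\phi(s_1)}$ of Thm.\ \ref{thm:GPK}, while the first fraction contributes the exponential factor as above. Since the limiting joint transform factorizes into the $Q^{(1)}$-transform times the $\mathrm{Exp}(2/\s^2)$-transform, asymptotic independence and the stated marginals follow by the continuity theorem for Laplace transforms.

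The main obstacle is making the expansion of $\psi$ rigorous and uniform enough: one must justify that $\phi$ is twice differentiable at $0$ with the claimed derivatives (Taylor expansion of the Laplace exponent, valid under the second-moment assumption), that $\psi$ is well-defined and analytic in a neighbourhood of $0$ with $\psi(0)=0$ (so the inverse function theorem applies, using $\phi'(0)=r-\e>0$ for $\e$ small), and that the $o(\cdot)$ error terms can be controlled uniformly in $\e$ as $\e\downarrow 0$ — the argument $s_2\e(r-\e)$ shrinks with $\e$, so one needs the remainder in the $\psi$-expansion to be $o(\z^2)$ with a constant not blowing up as $\e\to 0$, which is fine since $\phi$ itself converges nicely (the $\e$-dependence of $\phi$ is only through the linear coefficient $r-\e\to r$). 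Everything else is the same bookkeeping of leading-order terms as in the Brownian case, just with $\tfrac12\s^2 s^2 + rs$ replaced by its Taylor polynomial.
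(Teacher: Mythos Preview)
Your strategy is exactly the paper's: expand $\phi$ to second order, invert to obtain the local behaviour of $\psi$, and feed this into Thm.~\ref{thm:joint}. However, there is a consequential slip. In Regime~I the upstream rate is $r_1=\E J_1+r$ with $r$ \emph{fixed}; only $r_2=\E J_1+\e$ depends on $\e$. Hence $\phi'(0)=r_1-\E J_1=r$, not $r-\e$ as you write. This matters: with your value the linear part of $\psi$ at $u=s\e(r-\e)$ equals $s\e$ exactly, so $s\e-\psi(u)$ retains only the quadratic correction $\tfrac{\s^2}{2(r-\e)}s^2\e^2$, and multiplying $\psi(u)/(s\e-\psi(u))\sim\tfrac{2(r-\e)}{\s^2 s\e}$ by the prefactor $\e/(r-\e)$ gives $2/(\s^2 s)$, which is not the exponential transform $\tfrac{2/\s^2}{2/\s^2+s}$ you then assert.

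With the correct $\phi'(0)=r$ one gets (this is precisely the paper's Lemma~\ref{lem:psigen})
\[
\psi\bigl(s\e(r-\e)\bigr)\;=\;s\e\;-\;\f{1}{r}\,s\e^{2}\;-\;\f{\s^{2}}{2r}\,s^{2}\e^{2}\;+\;o(\e^{2}),
\]
so that $s\e-\psi(u)=\tfrac{s\e^{2}}{r}\bigl(1+\tfrac12\s^{2}s\bigr)+o(\e^{2})$. The extra term $\tfrac{1}{r}s\e^{2}$, arising from the mismatch between $r$ and $r-\e$, is of the same order as the quadratic one and is indispensable for the exponential limit. With this correction your marginal and joint computations go through and coincide with the paper's proof (the factor $r=\phi'(0)$ you invoke at the end emerges from the first fraction of Thm.~\ref{thm:joint}, not as a separate multiplication).
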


{To prove} this proposition, we require the following lemma.

\begin{lemma}
\label{lem:psigen}
Let 
\begin{equation}
\label{eq_phi}
\phi(s) = sr + \f12 \s^2 s^2 +  K_1 s^{\eta_1} + o(s^{\eta_1}),\end{equation}
with $\eta_1>2$. Then the inverse function $\psi$ with argument $s \e(r-\e)$, {satisfies}, for $\e\downarrow0$,
\[
\psi(s\e(r-\e)) = s \e - \f1r s \e^2 - \f{\s^2}{2r} s^2 \e^2 + o(\e^2).
\]
\end{lemma}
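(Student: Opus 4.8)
The plan is to invert the expansion \eqref{eq_phi} perturbatively. I would write $u := s\e(r-\e)$ for the argument and seek $\psi(u)$ as a power series in $\e$; since we only want terms up to $o(\e^2)$, a truncated ansatz $\psi(u) = a_1\e + a_2\e^2 + o(\e^2)$ suffices, and the task is to pin down the coefficients $a_1, a_2$ (each possibly depending on $s$, $r$, $\s$). The defining relation is $\phi(\psi(u)) = u$, i.e.
\[
r\,\psi(u) + \tfrac12\s^2\psi(u)^2 + K_1\psi(u)^{\eta_1} + o(\psi(u)^{\eta_1}) = s\e(r-\e).
\]
First I would substitute the ansatz into the left-hand side. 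The leading term $r\psi(u)$ contributes $r a_1\e + r a_2\e^2$; the quadratic term $\tfrac12\s^2\psi(u)^2$ contributes $\tfrac12\s^2 a_1^2\e^2$ at order $\e^2$ (the $\e^3$ piece is absorbed into the error); and because $\eta_1 > 2$, the term $K_1\psi(u)^{\eta_1}$ is $O(\e^{\eta_1}) = o(\e^2)$ and likewise the $o(s^{\eta_1})$ remainder, so these two contribute nothing to the orders we track. On the right-hand side, $s\e(r-\e) = r s\,\e - s\,\e^2$.

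Then I would match powers of $\e$. Order $\e^1$: $r a_1 = r s$, hence $a_1 = s$. Order $\e^2$: $r a_2 + \tfrac12\s^2 a_1^2 = -s$, so $r a_2 = -s - \tfrac12\s^2 s^2$ and $a_2 = -\tfrac1r s - \tfrac{\s^2}{2r}s^2$. Substituting back gives exactly
\[
\psi(s\e(r-\e)) = s\e - \tfrac1r s\,\e^2 - \tfrac{\s^2}{2r}s^2\,\e^2 + o(\e^2),
\]
as claimed. One should also note that $\psi$ is well-defined and analytic near $0$ (it is the inverse of $\phi$, which satisfies $\phi(0)=0$, $\phi'(0) = r > 0$), so the series inversion is legitimate and the ansatz captures the true expansion; a brief remark to this effect, perhaps invoking the analytic inverse function theorem or simply the strict monotonicity of $\phi$ on a neighbourhood of $0$, closes the logical gap.

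The only mildly delicate point is bookkeeping of the error terms: one must check that every neglected contribution — the $\e^3$ tail of $r\psi(u)$, the cross and higher terms of $\tfrac12\s^2\psi(u)^2$, and the whole $K_1\psi(u)^{\eta_1}+o(\psi(u)^{\eta_1})$ block — is genuinely $o(\e^2)$; the condition $\eta_1 > 2$ is precisely what makes the last block harmless. I expect this to be the main obstacle, though it is routine rather than conceptually hard. Everything else is the elementary coefficient-matching above.
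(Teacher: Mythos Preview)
Your proof is correct and follows essentially the same route as the paper: both posit an ansatz for $\psi(s\e(r-\e))$ in powers of $\e$, substitute into the identity $\phi(\psi(u))=u$, and match coefficients, using $\eta_1>2$ to discard the $K_1\psi^{\eta_1}$ contribution as $o(\e^2)$. The only cosmetic difference is that the paper writes the ansatz as $C_1 s\e + C_2 s\e^2 + C_3 s^2\e^2$ with scalar coefficients, whereas you absorb the $s$-dependence into $a_1,a_2$; your added remark on the legitimacy of the inversion (via $\phi'(0)=r>0$) is a welcome clarification the paper omits.
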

\begin{proof}[Proof of Lemma \ref{lem:psigen}]
Suppose that 
\[
\psi(s\e(r-\e)) = C_1 s \e + C_2 s\e^2 + C_3 s^2 \e^2 + o(\e^2).
\]
Consider
\begin{eqnarray*}
\phi(\psi(s\e(r-\e))) - s\e(r-\e) &= &\psi(s\e(r-\e))r + \f12 \s^2\psi(s\e(r-\e))^2 \\
&&+\, K_1\psi(s\e(r-\e))^{\eta_1} - s\e(r-\e) + o(\e^2)\\
&=&(C_1 r -r)s\e + (r C_2 +1)s\e^2  + \e^2(\s^2 C_1 + 2r C_3)\f12s^2  + o(\e^2).
\end{eqnarray*}
For $\psi$ to be the inverse of $\phi$ for $\e \downarrow0$, we {equate} the above to zero. This is achieved by taking the constants $C_1=1$, $C_2 = -\f1r$ and $C_3 = -\f{\s^2}{2r}$. This proves the lemma. 
\end{proof}

At first glance, it may be unclear why $\psi$ in Lemma \ref{lem:psigen} has this specific form. However, in case of e.g.\ compound Poisson input, this shape arises naturally, as is demonstrated in Example \ref{ex:MG1} below. We first prove the main result.

\begin{proof}[Proof of Proposition \ref{prop:prop1}]
Assume that $\Var J_1 = \sigma^2<\infty$. We first develop a general expansion for $\phi$. From the definition of $\phi$, we have $\phi(s) = s r_1 + \log \E e^{-sJ_1}.$
{Note that $\phi(s)$ is linear in $r$ in $s=0$:}
\[
\phi'(0) = r_1 - \E J_1 = \E J_1 + r - \E J_1 = r.
\]
Now note that $\phi''(0) = \Var J_1 = \s^2$. This means that the coefficient of $s^2$ must be $\f12\s^2$. Upon combining all of the above, we see that necessarily
\[
\phi(s) = sr +\f12\s^2 s^2 +   o(s^2).
\]
We can write
\[
\phi(s) = sr +\f12 \s^2 s^2 +   K_1 s^{\eta_1} + o(s^{\eta_1}),
\]
for some $K_1\in\mathbb{R}$, where $\eta_1=3$ corresponds to the existence of a finite third moment, and $2<\eta_1<3$ corresponds to an infinite third moment. 
{It thus follows that $\phi$ as in (\ref{eq_phi}) covers all input processes with finite second moment. }Therefore we can use the functions $\phi$ and $\psi$ in Lemma \ref{lem:psigen}, and apply them to Thm.\ \ref{thm:joint}. By scaling only the workload of the second queue by a factor $\e$ and taking the heavy-traffic limit, we find
\[
\lim_{\e\downarrow0} \E e^{- s_1 Q^{(1)} - s_2\e Q^{(2)}} = \f{s_1 r}{\phi(s_1)} \f{1}{1+\f12\s^2 s_2} = \f{s_1 \phi'(0)}{\phi(s_1)} \f{1}{1+\f12\s^2 s_2}  .
\]
The result follows.
\end{proof}

\begin{example}
\label{ex:MG1}
Suppose that the input process is a compound Poisson process in which the first two moments of the job sizes are finite: $\E B, \E B^2 <\infty$. The goal is to find an asymptotic expression for $\psi(s\e(r-\e))$ as $\e\downarrow0$, while $s\geq0$ is fixed. The proof of Lemma~\ref{lem:psigen} is by validation. How such an expression for $\psi(s\e(r-\e))$ can be constructed becomes clear in this example. We approach the problem in the following steps:

\begin{itemize}
\item Derive the Tak\'acs equation {(describing the LST $\pi$ of the busy period in an M/G/1 queue)} with service rate equal to $r_1$;
\item Use this Tak\'acs equation to express $\psi$ in terms of $\pi$;
\item Expand $\pi$, which yields an expansion for $\psi$.
\end{itemize}

Since we have a compound Poisson input process, the Laplace exponent is given by
\begin{equation}
\label{eq:laplace exponent}
\phi( s) =  s r_1 - \l + \l b( s),
\end{equation}
with $b(s) = \E e^{-s B}$. Let $\tau^0$ denote the busy period started by a job arriving at an empty system. {Using the standard argumentation, it turns out that}
\begin{equation} 
\label{eq:pib}
\pi(s) = b\Big(\f{1}{r_1} (\l-\l\pi(s)+s)\Big);
\end{equation} 
this functional equation is well-known for $r_1=1$,  cf.\ Section 1.3 in \cite{Takacs}, but it can be readily extended to general $r_1$. Eqns.\ \eqref{eq:laplace exponent} and \eqref{eq:pib} imply
\[
\f{1}{\l} \phi\Big(\f{1}{r_1}(\l-\l\pi( s)+ s)\Big) - \f{ s}{\l} =b\Big(\f{1}{r_1}(\l-\l\pi( s)+ s)\Big) - \pi( s) = 0.
\]
Applying the inverse function $\psi$, we obtain
\begin{equation}
\label{eq:psipi}
\psi( s) = \f{\l-\l\pi( s)+ s}{1+r}.
\end{equation}

Now we will find an expansion for $\pi$, which {in turn} yields an expansion for $\psi$. Using Eqn.\ \eqref{eq:pib} and some elementary calculus, we find
\begin{align*}
\pi'(0) = \f{-\E B}{r_1 - \l \E B} \quad\text{and}\quad \pi''(0) & = \f{r_1^2 \E B^2}{(r_1-\l\E B)^3}.
\end{align*}
Recall that the above quantities are finite {by the conditions we imposed on the moments of $B$}, and since the loads are assumed to be less than one. Therefore,
\[
\pi(s) = 1 - \f{\E B}{r_1 - \l \E B} s + \f12 \f{r_1^2 \E B^2}{(r_1-\l\E B)^3} s^2 + o(s^2).
\]
Substituting this into Eqn.\ \eqref{eq:psipi} yields
\[
\psi(s) = \f1r s - \f12 \f{\l \E B^2}{r^2} s^2 + o(s^2).
\]
It follows that
\[
\psi(s\e(r_1-r_2)) = \psi(s\e(r-\e)) = s\e - \f1r\Big(s\e^2 + \f12 \l \E B^2 s^2 \e^2\Big)+ o(\e^2).
\]
Noting that $\l\E B^2=\s^2$, we find the structure of $\psi(s\e(r_1-r_2))$ as in Lemma \ref{lem:psigen}.
\end{example}

\subsection*{Regime II}
In the following we consider the corresponding Regime II result. It should be noted that the methodology is similar to the one for Regime I. However, since the $\e$ now plays a different role, we cannot use Lemma \ref{lem:psigen}, but we develop Lemma \ref{lem:psiK} instead.

\begin{proposition}
\label{prop:MG1R2}
{Let the input process $J\in\mathcal{S}^+$ be such that $\Var J_1 =\s^2<\infty$}. Then, in Regime II, the joint scaled workload is given by
\[
\lim_{\e\downarrow0} \E e^{- s_1\e Q^{(1)}- s_2\e Q^{(2)}} =
\f{s_2(\g-2- s_1\s^2)- s_1\g+(s_2- s_1)\s\sqrt{\f{\g^2}{\s^2}+2(\g-1)s_2}}{(s_2\s^2+2)((\g-1)s_2-\g s_1-\frac{1}{2} s_1^2\s^2)}.\]
\end{proposition}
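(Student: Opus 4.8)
The plan is to mimic the Regime~I argument of Proposition~\ref{prop:prop1}, but with the scaling and parametrization of Regime~II, i.e. $r_1 = \E J_1 + \g\e$, $r_2 = \E J_1 + \e$, so that $r_1 - r_2 = (\g-1)\e$, and with \emph{both} workloads scaled by $\e$. First I would record, exactly as in the proof of Proposition~\ref{prop:prop1}, the expansion
\[
\phi(s) = s\,\g\e + \tfrac12\s^2 s^2 + K_1 s^{\eta_1} + o(s^{\eta_1}), \qquad \eta_1 > 2,
\]
which is valid for any $J\in\mathcal{S}^+$ with $\Var J_1 = \s^2 < \infty$ (here $\phi'(0) = r_1 - \E J_1 = \g\e$ and $\phi''(0) = \s^2$). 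The point is that now the linear coefficient $\g\e$ itself carries a factor $\e$, which is why Lemma~\ref{lem:psigen} no longer applies and a new expansion of $\psi$ is needed.

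The key step is to establish the analogue of Lemma~\ref{lem:psigen} in this regime --- the announced Lemma~\ref{lem:psiK}. Concretely I would show that, for fixed $s\ge 0$,
\[
\psi\bigl(s_2\e(r_1-r_2)\bigr) = \psi\bigl((\g-1)s_2\e^2\bigr) = -\f{\g\e}{\s^2} + \f{\e}{\s}\sqrt{\f{\g^2}{\s^2} + 2(\g-1)s_2} + o(\e),
\]
i.e. $\psi((\g-1)s_2\e^2) = \e\,\chi(s_2) + o(\e)$ with $\chi(s_2) := -\g/\s^2 + \s^{-1}\sqrt{\g^2/\s^2 + 2(\g-1)s_2}$. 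As in Lemma~\ref{lem:psigen} this is proved by validation: substitute the ansatz $\psi = \e\,\chi + o(\e)$ into $\phi(\psi(\cdot)) = (\g-1)s_2\e^2$, use the expansion of $\phi$ above, note that the $K_1$-term contributes $K_1 (\e\chi)^{\eta_1} = o(\e^2)$ since $\eta_1 > 2$, collect the $\e$- and $\e^2$-coefficients, and check that $\chi$ is precisely the root of $\g\chi + \tfrac12\s^2\chi^2 = (\g-1)s_2$ with the correct (nonnegative, since $\psi\ge 0$) sign. This is exactly the quantity $\psi(s)$ in Eqn.~\eqref{eq:psi2} after the substitution $r = \g\e$ and rescaling, so the Brownian computation that led to Eqn.~\eqref{eq:joint} tells me in advance what the final answer must be.

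With this in hand, the remainder is substitution and a limit. Starting from Thm.~\ref{thm:joint} with arguments $s_1\e$ and $s_2\e$,
\[
\E e^{- s_1\e Q^{(1)}-s_2\e Q^{(2)}} =
\f{-\E X_1^{(2)}\,s_2\e}{s_2\e-\psi(s_2\e(r_1-r_2))}\;
\f{\psi(s_2\e(r_1-r_2))- s_1\e}{(r_1-r_2)s_2\e-\phi(s_1\e)},
\]
I would divide numerator and denominator of the first factor by $\e$ (using $-\E X_1^{(2)} = r_2 - \E J_1 = \e$, so $-\E X_1^{(2)}s_2\e/\e = s_2\e \to$ handled together with the $\e$ in the denominator) and of the second factor by $\e^2$, then let $\e\downarrow0$. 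Using $\psi(s_2\e(r_1-r_2)) = \e\chi(s_2) + o(\e)$ and $\phi(s_1\e) = \g s_1\e^2 + \tfrac12\s^2 s_1^2\e^2 + o(\e^2)$ (again $\eta_1>2$ kills the higher-order term), the first factor tends to $s_2/(s_2 - \chi(s_2))$ and the second to $(\chi(s_2) - s_1)/((\g-1)s_2 - \g s_1 - \tfrac12 s_1^2\s^2)$. Finally I would simplify the product to the stated closed form by clearing the surd in $\chi(s_2)$ exactly as in the passage from the displayed expression after Eqn.~\eqref{eq:psi2} to Eqn.~\eqref{eq:joint}; since that algebraic identity has already been carried out in the Brownian case, I can simply invoke it.

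The main obstacle is Lemma~\ref{lem:psiK}: one must be careful that the $o(\e^{\eta_1})$ error in $\phi$ really does translate into an $o(\e)$ error in $\psi$ uniformly enough to survive division by $\e$ and $\e^2$, and that the correct branch of the quadratic is selected (the one with $\psi \ge 0$, equivalently $\chi(s_2)\ge 0$ for $s_2\ge 0$, which holds since $\sqrt{\g^2/\s^2 + 2(\g-1)s_2}\ge \g/\s$ when $\g>1$). Everything downstream of the lemma is the same bounded-convergence-of-an-explicit-rational-function argument used for Proposition~\ref{prop:prop1}, plus the purely algebraic simplification already validated in Section~\ref{sec:BM}.
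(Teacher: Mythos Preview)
Your proposal is correct and follows essentially the same approach as the paper: establish the Regime~II analogue of Lemma~\ref{lem:psigen} (namely Lemma~\ref{lem:psiK}) by a validation argument, then substitute into Thm.~\ref{thm:joint} and take $\e\downarrow 0$, reusing the Brownian algebra of Eqn.~\eqref{eq:joint} for the final simplification. The only cosmetic difference is that the paper states Lemma~\ref{lem:psiK} in normalized form (linear coefficient $\e$, quadratic coefficient $\tfrac12$) and carries out a case analysis on the exponent of the correction term, whereas you keep $\g$ and $\s$ explicit and identify $\chi(s_2)$ directly as the nonnegative root of $\g\chi+\tfrac12\s^2\chi^2=(\g-1)s_2$; the substance is the same.
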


\begin{remark}
Note that the result in Prop.\ \ref{prop:MG1R2} corresponds to Eqn.\ \eqref{eq:joint}, i.e., the LST we found in case of Brownian input, except now we do take a proper heavy-traffic limit, whereas Eqn.\ \eqref{eq:joint} holds for all $\e>0$. 
\end{remark}

\begin{lemma}
\label{lem:psiK}
Let
\[
\phi(s) = s\e + \f12 s^2 +  K_1 s^{\eta_1} + o(s^{\eta_1}),
\]
for some constant $K_1\in\mathbb{R}$ and $2<\eta_1\leq 3$. Then, asymptotically for $\e\downarrow0$, we have
\[
\psi( s\e^2(\g-1)) = - \e + \e\sq{1+2 s(\g-1)} + o(\e).
\]
\end{lemma}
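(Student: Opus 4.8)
The plan is to prove the lemma by \emph{validation}, in the same spirit as the proof of Lemma~\ref{lem:psigen}: I posit an ansatz for the leading-order behaviour of $\psi(s\e^2(\g-1))$, substitute it into the hypothesized expansion of $\phi$, and use the defining identity $\phi(\psi(y))=y$ with $y=s\e^2(\g-1)$ to determine the unknown coefficient. Since in Regime II we have $r_1-r_2=(\g-1)\e$ and the downstream workload is scaled by $\e$, the relevant argument of $\psi$ is indeed $s\e\cdot(\g-1)\e = s\e^2(\g-1)\to0$; guided by the exact Brownian inverse \eqref{eq:psi2}, the natural ansatz is $\psi(s\e^2(\g-1)) = C_0\,\e + o(\e)$ for a constant $C_0=C_0(s,\g)\geq0$ (nonnegativity because $\psi$ maps $[0,\infty)$ into itself).

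The key step is then to substitute this ansatz into $\phi(s)=s\e+\f12 s^2+K_1 s^{\eta_1}+o(s^{\eta_1})$ and collect the coefficient of $\e^2$. The linear term contributes $C_0\e^2$, the quadratic term $\f12 C_0^2\e^2$, and — this is where the hypothesis $\eta_1>2$ enters — the term $K_1\psi(\cdot)^{\eta_1}$ together with the remainder $o(\psi(\cdot)^{\eta_1})$ are both $O(\e^{\eta_1})=o(\e^2)$, hence negligible. Equating $\phi(\psi(s\e^2(\g-1)))=s\e^2(\g-1)$ at order $\e^2$ then yields the quadratic $\f12 C_0^2 + C_0 = s(\g-1)$, whose nonnegative root is $C_0=-1+\sq{1+2s(\g-1)}$. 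Writing $\psi(s\e^2(\g-1))=v(\e)\e$ with $v(\e)\to C_0$ gives the stated expansion $\psi(s\e^2(\g-1))=-\e+\e\sq{1+2s(\g-1)}+o(\e)$.

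The one point that genuinely needs care — and what I expect to be the main obstacle — is justifying the ansatz, i.e.\ that $u:=\psi(s\e^2(\g-1))$ is of exact order $\e$ (the proof of Lemma~\ref{lem:psigen} treats the analogous point only informally). Continuity of $\psi$ with $\psi(0)=0$ gives $u\to0$, so $u\e+\f12 u^2+O(u^{\eta_1})=s\e^2(\g-1)$. For $s>0$: if $u/\e\to\infty$ along a subsequence, then $\f12 u^2$ dominates the left-hand side, forcing $u^2=O(\e^2)$, a contradiction; if $u/\e\to0$, then $u\e$ dominates, forcing $u\sim s(\g-1)\e$, again contradicting $u/\e\to0$. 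Hence $u=\Theta(\e)$; substituting $u=v(\e)\e$, dividing the identity by $\e^2$, and letting $\e\downarrow0$ (the perturbation term contributing only $O(\e^{\eta_1-2})\to0$) forces $v(\e)\to C_0$, completing the argument. As a consistency check, for $\phi(s)=s\e+\f12 s^2$ (i.e.\ $K_1=0$, Brownian input with $\s=1$) the identity $\phi(\psi(y))=y$ is exact and gives $\psi(s\e^2(\g-1))=-\e+\sq{\e^2+2s\e^2(\g-1)}=-\e+\e\sq{1+2s(\g-1)}$, matching the general formula.
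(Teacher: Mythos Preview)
Your proof is correct. Both your argument and the paper's rest on the identity $\phi(\psi(y))=y$ with $y=s\e^2(\g-1)$, but the executions differ. The paper \emph{assumes} that the remainder has a specific power-law form, writing $\psi(s\e^2(\g-1)) = -\e+\e\sqrt{1+2s(\g-1)}+K_2(s)\,\e^{2\eta_2}$, and then does a three-case analysis on $\eta_2$ to rule out $\eta_2<\tfrac12$ and, in the case $\eta_2>\tfrac12$, to pin down $\eta_2=(\eta_1-1)/2$ together with an explicit $K_2(s)$. Your route instead establishes $u=\Theta(\e)$ directly by a dominant-balance contradiction and then reads off the leading coefficient from a single limit after dividing the identity by $\e^2$.

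Your version is cleaner and, strictly speaking, more rigorous for the lemma as stated: it avoids the unjustified a~priori assumption that the error takes a pure power form in $\e$. What the paper's argument buys is the next-order correction term (its Case~3 yields $K_2(s)\,\e^{\eta_1-1}$ explicitly), but that extra information is not used anywhere downstream.
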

\begin{proof}[Proof of Lemma \ref{lem:psiK}]
Suppose that 
\[
\psi( s\e^2(\g-1)) = - \e + \e\sq{1+2 s(\g-1)} + K_2( s) \e^{2\eta_2},
\]
for some function $K_2$ of $ s$ (and independent of $\e$) and for some constant $\eta_2$. If we show that \[\lim_{\e\downarrow0}\f{\phi(\psi(s\e^2(\g-1)))}{s\e^2(\g-1)}=1\] and $\eta_2\geq\f12$, then we have proved the lemma.

Indeed, for all $ s\geq0$,
\begin{eqnarray*}
\phi(\psi( s\e^2(\g-1))) - s\e^2(\g-1) &=& - \e^2 + \e^2\sq{1+2 s(\g-1)}+K_2( s)\e^{2\eta_2+1}+o(\e^{2\eta_1})\\
&&+\, \f12 \Big(-\e+\e\sq{1+2 s(\g-1)} + K_2( s) \e^{2\eta_2} \Big)^2\\
&&+\,K_1\Big(-\e+\e\sq{1+2 s(\g-1)}+K_2( s)\e^{2\eta_2}\Big)^{\eta_1}\\
&&-\, s\e^2(\g-1).
\end{eqnarray*}
Hence, after simplification, we see that the following should hold for all $ s\geq0$:
\begin{eqnarray}
\nonumber
\lefteqn{\f12 K_2( s)^2 \e^{4\eta_2} + K_2( s)\e^{2\eta_2+1}\sq{1+2 s(\g-1)}}\\
&&+\,K_1\Big(-\e+\e\sq{1+2 s(\g-1)}+K_2( s)\e^{2\eta_2}\Big)^{\eta_1}+o(\e^{2\eta_1}) =0.\label{eq:lemeq}
\end{eqnarray}

\textbf{Case 1}.
If $\eta_2<\f12$, then using \eqref{eq:lemeq} we obtain for all $ s\geq0$,
\[
\f12 K_2( s)^2 \e^{4\eta_2} +K_1 K_2( s)^{\eta_1} \e^{2\eta_1\eta_2}+o(\e^{2\eta_1}) =0,
\]
which holds for $\epsilon\downarrow0$ if and only if $4\eta_2 = 2\eta_1\eta_2$. This implies $\eta_1=2$, but this contradicts $\eta_1>2$. We conclude that $\eta_2\geq\f12$. 
 
\textbf{Case 2}.
If $\eta_2=\f12$, then we can write
\[
\f12 K_2( s)^2 \e^{2} + K_2( s)\e^{2}\sq{1+2 s(\g-1)} + o(\e^2)=0,
\]
which is solved by $K_2=0$. Note that the conclusion of the lemma holds in this case.

\textbf{Case 3}.
If $\eta_2>\f12$, then we can write for all $ s\geq0$,
\[
K_2( s)\e^{2\eta_2+1}\sq{1+2 s(\g-1)}+K_1\e^{\eta_1}\Big(-1+\sq{1+2 s(\g-1)}\Big)^{\eta_1} +  o(\e^{\min\{4\eta_2,\eta_1\}}) =0.
\]
We see that we have to make sure that $2\eta_2 +1 = \eta_1$, since the equation has to hold for all $ s\geq0$. So define $\eta_2=\f 12 ({\eta_1-1})$. Then, for all $ s\geq0$,
\[
K_2( s)\sq{1+2 s(\g-1)}+K_1\Big(-1+\sq{1+2 s(\g-1)}\Big)^{\eta_1} +  o(1) =0.
\]
The conclusion of the lemma holds in this case,   noting that $\eta_2>\f12$, where 
\[
K_2( s) := -\f{K_1\Big(-1+\sq{1+2 s(\g-1)}\Big)^{\eta_1}}{\sq{1+2 s(\g-1)}}.
\] This proves the claim.
\end{proof}

\begin{proof}[Proof of Proposition \ref{prop:MG1R2}]
This result follows from Lemma \ref{lem:psiK} and Thm.\ \ref{thm:joint}, and taking the limit $\e\downarrow0$. The calculations are similar to those in the Brownian case, except there are some additional terms of small order $\e$ that cancel in the heavy-traffic limit.
\end{proof}

\subsection{Numerical approximations for exponential jobs}
\label{sec:numapp}
\begin{example}[Comparison of Regime I and Regime II]
Suppose that we have a system with compound Poisson input with exponential jobs, with $\l=1$, $\mu=1$. {By Eqn.\ \eqref{eq:osq}}, one obtains the Regime I approximation $Q^{(2)} \eqd \textrm{Exp}(\e)$. Due to Equation \eqref{eq:LSTQ2}, the Regime II approximation entails numerically inverting
\[
\E e^{- s Q^{(2)}} = \f{1}{\g-1}   \f{-2+\g+\sq{\g^2+2\f s\e\s^2(\g-1)}}{2+\f s\e\s^2}.
\]

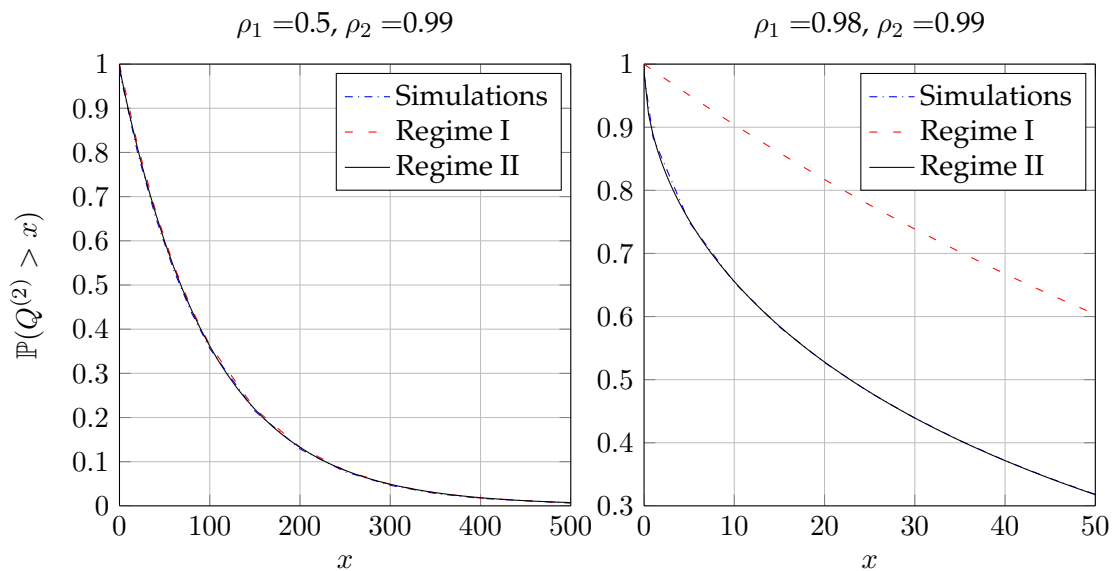
\begin{figure}[b!]
\centering
\begin{subfigure}{.5\textwidth}
  \centering
\newlength
\figureheight 
\newlength
\figurewidth 
\setlength
\figureheight{6.3cm} 
\setlength
\figurewidth{6cm} 
%
%
\begin{tikzpicture}

\begin{axis}[%
width=\figurewidth,
height=0.933\figureheight,
at={(0\figurewidth,0\figureheight)},
scale only axis,
separate axis lines,
every outer x axis line/.append style={black},
every x tick label/.append style={font=\color{black}},
xmin=0,
xmax=500,
xtick={  0,   100,  200,  300,  400,  500},
xlabel={$x$},
xmajorgrids,
every outer y axis line/.append style={black},
every y tick label/.append style={font=\color{black}},
ymin=0,
ymax=1,
ytick={  0, 0.1, 0.2, 0.3, 0.4, 0.5, 0.6, 0.7, 0.8, 0.9,   1},
ylabel={$\Pb(Q^{(2)}>x)$},
ymajorgrids,
axis background/.style={fill=white},
title={$\rho_1=$0.5, $\rho_2=$0.99},
legend style={legend cell align=left,align=left,draw=black}
]

\addplot[color=blue,dashdotted]
 table[row sep=crcr]{%
0.01	0.98966235\\
1	0.974746578\\
20	0.799958764\\
40	0.653489076\\
60	0.533933708\\
80	0.436203806\\
100	0.356405396\\
150	0.214737326\\
200	0.128810748\\
250	0.077289086\\
300	0.046514592\\
350	0.027973118\\
400	0.01696124\\
500	0.006282238\\
};
\addlegendentry{Simulations};

\addplot [color=red,loosely dashed]
  table[row sep=crcr]{%
0.01	0.999898995000338\\
1	0.989949833766045\\
20	0.81707842114073\\
40	0.667617146293829\\
60	0.545495563820241\\
80	0.445712654025515\\
100	0.364182191633613\\
150	0.219774883100251\\
200	0.132628668703061\\
250	0.0800380985934706\\
300	0.0483009992417292\\
350	0.0291484501599573\\
400	0.0175903637619464\\
500	0.00640609722645812\\
};
\addlegendentry{Regime I};

\addplot [color=black,solid]
  table[row sep=crcr]{%
0.01	1\\
0.1	0.997024287\\
1.08	0.98293592\\
2.07	0.9719698\\
3.06	0.961736437\\
4.05	0.951860574\\
5.04	0.942211138\\
6.03	0.9327289\\
7.02	0.923383132\\
8.01	0.914156372\\
9	0.905038001\\
9.99	0.896021182\\
10.9	0.887818563\\
12.8	0.87094549\\
14.7	0.854402365\\
16.6	0.838178321\\
18.5	0.822265049\\
20.4	0.806655517\\
22.3	0.79134337\\
24.2	0.776322649\\
28	0.747132861\\
29.9	0.7329529\\
30	0.7322141\\
40	0.66197791\\
50	0.598489825\\
60	0.541101432\\
70	0.489226692\\
80	0.442335875\\
90	0.399950145\\
100	0.36163667\\
110	0.327004204\\
120	0.295699091\\
130	0.267401651\\
140	0.241822918\\
150	0.21870169\\
160	0.19780186\\
170	0.178910003\\
180	0.161833203\\
190	0.146397075\\
200	0.132443989\\
210	0.119831462\\
220	0.108430697\\
230	0.098125273\\
240	0.088809955\\
250	0.080389615\\
260	0.072778269\\
270	0.065898191\\
280	0.059679125\\
290	0.054057562\\
300	0.048976097\\
310	0.044382839\\
320	0.040230883\\
330	0.036477831\\
340	0.033085358\\
350	0.030018821\\
360	0.027246904\\
370	0.024741302\\
380	0.022476429\\
390	0.020429156\\
400	0.018578577\\
410	0.016905794\\
420	0.015393726\\
430	0.014026931\\
440	0.012791453\\
450	0.011674674\\
460	0.010665191\\
470	0.009752696\\
480	0.008927869\\
490	0.008182289\\
500	0.007508341\\
};
\addlegendentry{Regime II};

\end{axis}
\end{tikzpicture}%
\end{subfigure}%
\begin{subfigure}{.5\textwidth}
  \centering
\setlength
\figureheight{6.3cm} 
\setlength
\figurewidth{6cm}
\begin{tikzpicture}

\begin{axis}[%
width=\figurewidth,
height=0.933\figureheight,
at={(0\figurewidth,0\figureheight)},
scale only axis,
separate axis lines,
every outer x axis line/.append style={black},
every x tick label/.append style={font=\color{black}},
xmin=0,
xmax=50,
xtick={ 0, 10, 20, 30, 40, 50},
xlabel={$x$},
xmajorgrids,
every outer y axis line/.append style={black},
every y tick label/.append style={font=\color{black}},
ymin=0.3,
ymax=1,
ytick={0, 0.1, 0.2, 0.3, 0.4, 0.5, 0.6, 0.7, 0.8, 0.9,1},
ylabel={},
ymajorgrids,
axis background/.style={fill=white},
title={$\rho_1=$0.98, $\rho_2=$0.99},
legend style={legend cell align=left,align=left,draw=black}
]
\addplot [color=blue,dashdotted]
  table[row sep=crcr]{%
0.01	0.983132754\\
1	0.883927872\\
5	0.749853232\\
10	0.653803106\\
15	0.583540756\\
20	0.527062844\\
25	0.479783892\\
30	0.439076044\\
35	0.403449346\\
40	0.371788164\\
45	0.343549844\\
50	0.318138456\\
};
\addlegendentry{Simulations};

\addplot [color=red,loosely dashed]
  table[row sep=crcr]{%
0.01	0.999898995000338\\
1	0.989949833766045\\
5	0.950749126896934\\
10	0.903923902295282\\
15	0.859404860888509\\
20	0.817078421140731\\
25	0.776836595505876\\
30	0.738576714918798\\
35	0.702201166855453\\
40	0.66761714629383\\
45	0.634736418940282\\
50	0.60347509611716\\
};
\addlegendentry{Regime I};

\addplot [color=black,solid]
  table[row sep=crcr]{%
0.0001	1\\
0.0131	0.987160418\\
0.5001	0.9206998\\
1	0.888018852\\
1.5	0.863051933\\
2	0.842102641\\
2.5	0.823733305\\
3	0.807205247\\
3.5	0.792078853\\
4	0.778067167\\
4.5	0.764970499\\
5	0.752643184\\
5.5	0.740975052\\
6	0.729880350\\
6.5	0.719290754\\
7	0.709150759\\
7.5	0.699414534\\
8	0.690043704\\
8.5	0.681005741\\
9	0.672272788\\
9.5	0.663820759\\
10	0.655628661\\
10.5	0.647678055\\
11	0.639952646\\
11.5	0.632437937\\
12	0.625120969\\
12.5	0.617990095\\
13	0.611034799\\
13.5	0.604245551\\
14	0.597613674\\
14.5	0.591131246\\
15	0.584791006\\
15.5	0.578586280\\
16	0.572510916\\
16.5	0.566559225\\
17	0.560725939\\
17.5	0.555006160\\
18	0.549395330\\
18.5	0.543889197\\
19	0.538483785\\
19.5	0.533175370\\
20	0.527960460\\
21	0.517798192\\
22.1	0.507008119\\
23.2	0.496597521\\
24.3	0.486541225\\
25.4	0.476816772\\
26.5	0.467404022\\
27.6	0.458284816\\
28.7	0.449442716\\
29.8	0.440862774\\
30.9	0.432531350\\
32	0.424435951\\
33.1	0.416565094\\
34.2	0.408908200\\
35.3	0.401455487\\
36.4	0.394197892\\
37.5	0.387126993\\
38.6	0.380234948\\
39.7	0.373514439\\
40.8	0.366958623\\
41.9	0.360561086\\
43	0.354315808\\
44.1	0.348217131\\
45.2	0.342259722\\
46.3	0.336438553\\
47.4	0.330748872\\
48.5	0.325186185\\
49.6	0.319746235\\
50	0.317797706\\
 };
\addlegendentry{Regime II};
\end{axis}
\end{tikzpicture}%
\end{subfigure}
\caption{\textit{Varying $\rho_1$ while keeping $\rho_2=0.99$. It appears that the Regime II approximation is almost perfect, and the Regime I approximation becomes worse the higher $\rho_1$ becomes.}}
\label{fig:HT1}
\end{figure}

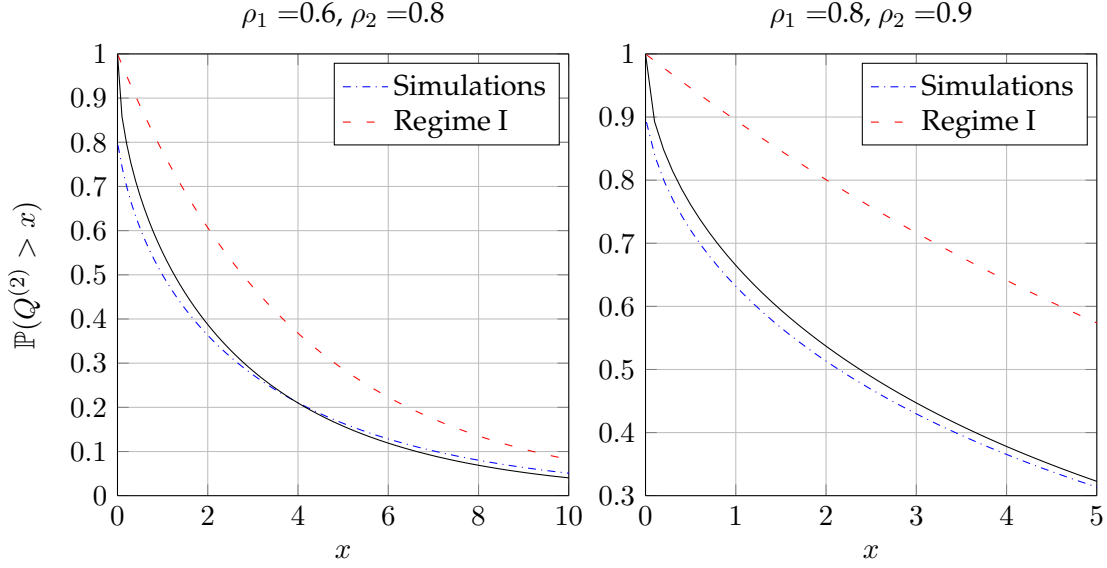
\begin{figure}[ht!]
\centering
\begin{subfigure}{.5\textwidth}
\centering
\setlength
\figureheight{6.3cm} 
\setlength
\figurewidth{6cm}
%
%
\begin{tikzpicture}

\begin{axis}[%
width=\figurewidth,
height=0.933\figureheight,
at={(0\figurewidth,0\figureheight)},
scale only axis,
separate axis lines,
every outer x axis line/.append style={black},
every x tick label/.append style={font=\color{black}},
xmin=0,
xmax=10,
xtick={0,2,4,6,8,10},
xlabel={$x$},
xmajorgrids,
every outer y axis line/.append style={black},
every y tick label/.append style={font=\color{black}},
ymin=0,
ymax=1,
ytick={  0, 0.1, 0.2, 0.3, 0.4, 0.5, 0.6, 0.7, 0.8, 0.9,   1},
ylabel={$\Pb(Q^{(2)}>x)$},
ymajorgrids,
axis background/.style={fill=white},
title={$\rho_1=$0.6, $\rho_2=$0.8},
legend style={legend cell align=left,align=left,draw=black}
]
\addplot [color=blue,dashdotted]
  table[row sep=crcr]{%
0.01	0.79358274\\
0.01	0.79358274\\
0.11	0.73955416\\
0.21	0.6968972\\
0.31	0.66123844\\
0.41	0.6304112\\
0.51	0.60305312\\
0.61	0.5783692\\
0.71	0.55579726\\
0.81	0.53501288\\
0.91	0.51566524\\
1.01	0.49760094\\
1.11	0.48064442\\
1.21	0.46467802\\
1.31	0.44956904\\
1.41	0.4352097\\
1.51	0.4215822\\
1.61	0.40858846\\
1.71	0.39618358\\
1.81	0.38432274\\
1.91	0.37295896\\
2.01	0.3620521\\
2.11	0.3515623\\
2.21	0.34149086\\
2.31	0.3318152\\
2.41	0.32249398\\
2.51	0.31351426\\
2.61	0.30487248\\
2.71	0.29650418\\
2.81	0.28844076\\
2.91	0.28066792\\
3.01	0.27314922\\
3.11	0.26588328\\
3.21	0.25883918\\
3.31	0.2520219\\
3.41	0.24542256\\
3.51	0.23906614\\
3.61	0.23289274\\
3.71	0.22689962\\
3.81	0.22109\\
3.91	0.21546766\\
4.01	0.21000918\\
4.11	0.2046905\\
4.21	0.19956926\\
4.31	0.19460588\\
4.41	0.189756\\
4.51	0.18504774\\
4.61	0.18045964\\
4.71	0.17601166\\
4.81	0.17169138\\
4.91	0.16750136\\
5.01	0.16342172\\
5.11	0.15945172\\
5.21	0.15559052\\
5.31	0.15183182\\
5.41	0.14817506\\
5.51	0.14461776\\
5.61	0.1411505\\
5.71	0.13777162\\
5.81	0.13450138\\
5.91	0.13131174\\
6.01	0.12821234\\
6.11	0.125175\\
6.21	0.12224078\\
6.31	0.11937014\\
6.41	0.1165729\\
6.51	0.11383604\\
6.61	0.11116412\\
6.71	0.10856662\\
6.81	0.10603724\\
6.91	0.10356752\\
7.01	0.10117044\\
7.11	0.09882112\\
7.21	0.09653526\\
7.31	0.09430488\\
7.41	0.09212856\\
7.51	0.08999374\\
7.61	0.08792308\\
7.71	0.08590274\\
7.81	0.08393046\\
7.91	0.08201208\\
8.01	0.08014622\\
8.11	0.07831824\\
8.21	0.0765389\\
8.31	0.07480342\\
8.41	0.07309542\\
8.51	0.07142994\\
8.61	0.06981378\\
8.71	0.06822078\\
8.81	0.06667224\\
8.91	0.0651659\\
9.01	0.06369172\\
9.11	0.06224568\\
9.21	0.0608488\\
9.31	0.0594864\\
9.41	0.05815906\\
9.51	0.0568624\\
9.61	0.05558526\\
9.71	0.05434068\\
9.81	0.05313518\\
9.91	0.05194672\\
10.01	0.05079198\\
};
\addlegendentry{Simulations};

\addplot [color=red,loosely dashed]
  table[row sep=crcr]{%
0.01	0.99750312239746\\
0.01	0.99750312239746\\
0.11	0.972874682553454\\
0.21	0.948854321055801\\
0.31	0.925427024396637\\
0.41	0.902578149752926\\
0.51	0.880293415834221\\
0.61	0.858558893956395\\
0.71	0.837360999335754\\
0.81	0.816686482598111\\
0.91	0.796522421497492\\
1.01	0.776856212839313\\
1.11	0.757675564602974\\
1.21	0.738968488258944\\
1.31	0.720723291275541\\
1.41	0.702928569810718\\
1.51	0.685573201584293\\
1.61	0.668646338926159\\
1.71	0.652137401996139\\
1.81	0.63603607217124\\
1.91	0.620332285596178\\
2.01	0.605016226893143\\
2.11	0.590078323026865\\
2.21	0.575509237321158\\
2.31	0.561299863623192\\
2.41	0.54744132061185\\
2.51	0.533924946246618\\
2.61	0.520742292353521\\
2.71	0.507885119344745\\
2.81	0.495345391068622\\
2.91	0.483115269786778\\
3.01	0.471187111275287\\
3.11	0.459553460046784\\
3.21	0.448207044690545\\
3.31	0.437140773327614\\
3.41	0.426347729178153\\
3.51	0.415821166238224\\
3.61	0.405554505063321\\
3.71	0.395541328656001\\
3.81	0.385775378455054\\
3.91	0.376250550423696\\
4.01	0.366960891234347\\
4.11	0.357900594547609\\
4.21	0.349063997383117\\
4.31	0.340445576579986\\
4.41	0.332039945344661\\
4.51	0.323841849883993\\
4.61	0.31584616612145\\
4.71	0.308047896494398\\
4.81	0.300442166830464\\
4.91	0.293024223301021\\
5.01	0.28578942944989\\
5.11	0.278733263295399\\
5.21	0.271851314504006\\
5.31	0.265139281633689\\
5.41	0.258592969445408\\
5.51	0.252208286280946\\
5.61	0.245981241505486\\
5.71	0.239907943013336\\
5.81	0.233984594795235\\
5.91	0.228207494565726\\
6.01	0.222573031449104\\
6.11	0.217077683722505\\
6.21	0.211718016614711\\
6.31	0.206490680159306\\
6.41	0.201392407100844\\
6.51	0.196420010852698\\
6.61	0.191570383505349\\
6.71	0.186840493883836\\
6.81	0.182227385653174\\
6.91	0.17772817547055\\
7.01	0.173340051183139\\
7.11	0.169060270070414\\
7.21	0.164886157129861\\
7.31	0.160815103405015\\
7.41	0.156844564354772\\
7.51	0.152972058262975\\
7.61	0.149195164687255\\
7.71	0.14551152294618\\
7.81	0.141918830643747\\
7.91	0.138414842230317\\
8.01	0.134997367599066\\
8.11	0.131664270717101\\
8.21	0.128413468290371\\
8.31	0.125242928461535\\
8.41	0.12215066953999\\
8.51	0.11913475876325\\
8.61	0.116193311088902\\
8.71	0.113324488016397\\
8.81	0.110526496437928\\
8.91	0.107797587517676\\
9.01	0.105136055598731\\
9.11	0.102540237137004\\
9.21	0.100008509661456\\
9.31	0.097539290759999\\
9.41	0.0951310370904406\\
9.51	0.0927822434158417\\
9.61	0.0904914416636959\\
9.71	0.0882572000083363\\
9.81	0.0860781219759974\\
9.91	0.0839528455719741\\
10.01	0.0818800424293303\\
};
\addlegendentry{Regime I};

\addplot [color=black,solid,forget plot]
  table[row sep=crcr]{%
0.0001	1.0000\\
0.1001	0.8579\\
0.2001	0.7978\\
0.3001	0.7517\\
0.4001	0.7129\\
0.5001	0.6791\\
0.6001	0.6488\\
0.7001	0.6212\\
0.8001	0.5959\\
0.9001	0.5724\\
1.0001	0.5505\\
1.1001	0.5300\\
1.2001	0.5106\\
1.3001	0.4924\\
1.4001	0.4751\\
1.5001	0.4587\\
1.6001	0.4431\\
1.7001	0.4282\\
1.8001	0.4140\\
1.9001	0.4004\\
2.0001	0.3874\\
2.1001	0.3750\\
2.2001	0.3630\\
2.3001	0.3516\\
2.4001	0.3406\\
2.5001	0.3300\\
2.6001	0.3198\\
2.7001	0.3100\\
2.8001	0.3005\\
2.9001	0.2914\\
3.0001	0.2827\\
3.1001	0.2742\\
3.2001	0.2660\\
3.3001	0.2582\\
3.4001	0.2505\\
3.5001	0.2432\\
3.6001	0.2361\\
3.7001	0.2292\\
3.8001	0.2225\\
3.9001	0.2161\\
4.0001	0.2099\\
4.0991	0.2039\\
4.1991	0.1981\\
4.2991	0.1924\\
4.3991	0.1870\\
4.4991	0.1817\\
4.5991	0.1765\\
4.6991	0.1715\\
4.7991	0.1667\\
4.8991	0.1621\\
4.9991	0.1575\\
5.0991	0.1531\\
5.1991	0.1489\\
5.2991	0.1447\\
5.3991	0.1407\\
5.4991	0.1368\\
5.5991	0.1330\\
5.6991	0.1294\\
5.7991	0.1258\\
5.8991	0.1224\\
5.9991	0.1190\\
6.0991	0.1158\\
6.1991	0.1126\\
6.2991	0.1095\\
6.3991	0.1066\\
6.4991	0.1037\\
6.5991	0.1008\\
6.6991	0.0981\\
6.7991	0.0955\\
6.8991	0.0929\\
6.9991	0.0904\\
7.0991	0.0879\\
7.1991	0.0856\\
7.2991	0.0833\\
7.3991	0.0810\\
7.4991	0.0789\\
7.5991	0.0768\\
7.6991	0.0747\\
7.7991	0.0727\\
7.8991	0.0708\\
7.9991	0.0689\\
8.0981	0.0671\\
8.1981	0.0653\\
8.2981	0.0636\\
8.3981	0.0619\\
8.4981	0.0602\\
8.5981	0.0586\\
8.6981	0.0571\\
8.7981	0.0556\\
8.8981	0.0541\\
8.9981	0.0527\\
9.0981	0.0513\\
9.1981	0.0499\\
9.2981	0.0486\\
9.3981	0.0474\\
9.4981	0.0461\\
9.5981	0.0449\\
9.6981	0.0437\\
9.7981	0.0426\\
9.8981	0.0415\\
9.9981	0.0404\\
9.9991	0.0404\\
10.0001	0.0404\\
10.0011	0.0403\\
};
\addlegendentry{Regime II};
\end{axis}
\end{tikzpicture}%
\end{subfigure}%
\begin{subfigure}{.5\textwidth}
  \centering
\setlength
\figureheight{6.3cm} 
\setlength
\figurewidth{6cm}
%
%
\begin{tikzpicture}

\begin{axis}[%
width=\figurewidth,
height=0.933\figureheight,
at={(0\figurewidth,0\figureheight)},
scale only axis,
separate axis lines,
every outer x axis line/.append style={black},
every x tick label/.append style={font=\color{black}},
xmin=0,
xmax=5,
xtick={0, 1, 2, 3, 4, 5},
xlabel={$x$},
xmajorgrids,
every outer y axis line/.append style={black},
every y tick label/.append style={font=\color{black}},
ymin=0.3,
ymax=1,
ytick={  0, 0.1, 0.2, 0.3, 0.4, 0.5, 0.6, 0.7, 0.8, 0.9,   1},
ylabel={},
ymajorgrids,
axis background/.style={fill=white},
title={$\rho_1=$0.8, $\rho_2=$0.9},
legend style={legend cell align=left,align=left,draw=black}
]
\addplot [color=blue,dashdotted]
  table[row sep=crcr]{%
0.01	0.8919956\\
0.01	0.8919956\\
0.11	0.8354807\\
0.21	0.7973083\\
0.31	0.7670288\\
0.41	0.7413264\\
0.51	0.7185474\\
0.61	0.6980308\\
0.71	0.6792633\\
0.81	0.6618521\\
0.91	0.6456046\\
1.01	0.6303485\\
1.11	0.6159902\\
1.21	0.6023422\\
1.31	0.5893706\\
1.41	0.5769454\\
1.51	0.5650737\\
1.61	0.5536619\\
1.71	0.542782\\
1.81	0.5322822\\
1.91	0.5220936\\
2.01	0.5122693\\
2.11	0.5028307\\
2.21	0.4935978\\
2.31	0.484717\\
2.41	0.476111\\
2.51	0.467689\\
2.61	0.4595574\\
2.71	0.4515985\\
2.81	0.4438793\\
2.91	0.4363595\\
3.01	0.4290024\\
3.11	0.4218676\\
3.21	0.4148985\\
3.31	0.408117\\
3.41	0.4014728\\
3.51	0.3949958\\
3.61	0.3886803\\
3.71	0.3824889\\
3.81	0.3764451\\
3.91	0.3705882\\
4.01	0.3648223\\
4.11	0.3591158\\
4.21	0.3536072\\
4.31	0.3481848\\
4.41	0.3428788\\
4.51	0.3376804\\
4.61	0.3326094\\
4.71	0.327612\\
4.81	0.3226945\\
4.91	0.3179261\\
5.01	0.3131971\\
};
\addlegendentry{Simulations};

\addplot [color=red,loosely dashed]
  table[row sep=crcr]{%
0.01	0.998889505944279\\
0.01	0.998889505944279\\
0.11	0.987852165765209\\
0.21	0.976936783898347\\
0.31	0.966142012751823\\
0.41	0.955466519624129\\
0.51	0.944908986539592\\
0.61	0.934468110085655\\
0.71	0.924142601251964\\
0.81	0.913931185271228\\
0.91	0.903832601461837\\
1.01	0.893845603072222\\
1.11	0.883968957126935\\
1.21	0.874201444274425\\
1.31	0.864541858636502\\
1.41	0.85498900765946\\
1.51	0.845541711966847\\
1.61	0.836198805213863\\
1.71	0.826959133943362\\
1.81	0.817821557443453\\
1.91	0.808784947606665\\
2.01	0.799848188790676\\
2.11	0.791010177680577\\
2.21	0.782269823152662\\
2.31	0.773626046139713\\
2.41	0.765077779497787\\
2.51	0.756623967874466\\
2.61	0.748263567578565\\
2.71	0.739995546451281\\
2.81	0.731818883738763\\
2.91	0.723732569966094\\
3.01	0.715735606812659\\
3.11	0.707827006988901\\
3.21	0.700005794114426\\
3.31	0.692271002597462\\
3.41	0.684621677515654\\
3.51	0.677056874498165\\
3.61	0.669575659609086\\
3.71	0.662177109232141\\
3.81	0.65486030995665\\
3.91	0.647624358464767\\
4.01	0.640468361419958\\
4.11	0.633391435356708\\
4.21	0.626392706571453\\
4.31	0.619471311014712\\
4.41	0.612626394184416\\
4.51	0.60585711102041\\
4.61	0.599162625800126\\
4.71	0.592542112035403\\
4.81	0.585994752370455\\
4.91	0.579519738480957\\
5.01	0.573116270974254\\
};
\addlegendentry{Regime I};

\addplot [color=black,solid,forget plot]
  table[row sep=crcr]{%
0.0001	1\\
0.1001	0.892873094\\
0.2001	0.848435567\\
0.3001	0.814457616\\
0.4001	0.785957676\\
0.5001	0.760997027\\
0.6001	0.738576949\\
0.7001	0.718101538\\
0.8001	0.699180812\\
0.9001	0.681542675\\
1.0001	0.664988115\\
1.1001	0.649366206\\
1.2001	0.634559145\\
1.3001	0.620472811\\
1.4001	0.607030524\\
1.5001	0.594168788\\
1.6001	0.581834275\\
1.7001	0.569981658\\
1.8001	0.558571998\\
1.9001	0.547571528\\
2.0001	0.536950726\\
2.1001	0.526683589\\
2.2001	0.516747059\\
2.3001	0.507120569\\
2.4001	0.497785669\\
2.5001	0.488725732\\
2.6001	0.479925700\\
2.7001	0.471371878\\
2.8001	0.463051769\\
2.9001	0.454953921\\
3.0001	0.447067809\\
3.1001	0.439383730\\
3.2001	0.431892711\\
3.3001	0.424586432\\
3.4001	0.417457162\\
3.5001	0.410497698\\
3.6001	0.403701313\\
3.7001	0.397061715\\
3.8001	0.390573004\\
3.9001	0.384229638\\
4.0001	0.378026405\\
4.0961	0.372198573\\
4.1961	0.366256002\\
4.2961	0.360439797\\
4.3961	0.354745801\\
4.4961	0.349170069\\
4.5961	0.343708860\\
4.6961	0.338358618\\
4.7961	0.333115960\\
4.8961	0.327977663\\
4.9961	0.322940656\\
5.0091	0.322293130\\
};
\addlegendentry{Regime II};

\end{axis}
\end{tikzpicture}%
\end{subfigure}
\caption{\textit{In the above two plots we see that again the Regime II approximation works significantly better than the Regime I approximation. Even for $\rho_1=0.6$ and $\rho_2=0.8$, the Regime II approximation shows remarkably good fit.}}
\label{fig:HT2}
\end{figure}

{\small
\begin{table}[ht!]
\centering
{\small
\begin{tabular}{r|c|c|c||c|c|c|c|}
 & \multicolumn{3}{|c||}{Fig.\ \ref{fig:HT1}, left plot}&  \multicolumn{4}{|c|}{Fig.\ \ref{fig:HT1}, right plot}\\
\hline
$x$&\textbf{Simul}&\textbf{R1}&\textbf{R2}&$x$&\textbf{Simul} &\textbf{R1}&\textbf{R2}\\
1&0.975&0.990&0.984&1&0.884&0.990&0.888\\
20&0.800&0.817&0.810&5&0.750&0.951&0.753\\
40&0.653&0.668&0.662&10&0.654&0.904&0.656\\
80&0.436&0.446&0.442&15&0.584&0.859&0.585\\
100&0.356&0.364&0.362&20&0.527&0.817&0.528\\
150&0.215&0.220&0.219&25&0.480&0.777&0.480\\
200&0.129&0.133&0.132&30&0.439&0.739&0.439\\
250&0.077&0.080&0.080&35&0.403&0.702&0.403\\
300&0.047&0.048&0.049&40&0.372&0.668&0.372\\
400&0.017&0.018&0.019&45&0.344&0.635&0.343\\
500&0.006&0.006&0.008&50&0.318&0.603&0.318
\end{tabular}}
\caption{\textit{The values in this table correspond to the left and right plot in Figure \ref{fig:HT1}. {\em {\bf Simul}} stands for the simulated values, and {\em {\bf R1}}, {\em {\bf R2}} stand for the approximated values using Regime I, II, respectively. }}
\label{tab:HT1}
\end{table}

\begin{table}[ht!]
\centering
{\small
\begin{tabular}{r|c|c|c||c|c|c|c|}
 & \multicolumn{3}{|c||}{Fig.\ \ref{fig:HT2}, left plot}&  \multicolumn{4}{|c|}{Fig.\ \ref{fig:HT2}, right plot}\\
\hline
$x$&\textbf{Simul}&\textbf{R1}&\textbf{R2}&$x$&\textbf{Simul} &\textbf{R1}&\textbf{R2}\\
1&0.498&0.777&0.551&0.5&0.719&0.945&0.761\\
2&0.362&0.605&0.387&1.0&0.630&0.894&0.665\\
3&0.273&0.471&0.283&1.5&0.565&0.846&0.594\\
4&0.210&0.367&0.210&2.0&0.512&0.800&0.537\\
5&0.163&0.286&0.157&2.5&0.468&0.757&0.489\\
6&0.128&0.223&0.119&3.0&0.429&0.716&0.447\\
7&0.101&0.173&0.090&3.5&0.395&0.677&0.410\\
8&0.080&0.135&0.069&4.0&0.365&0.640&0.378\\
9&0.064&0.105&0.053&4.5&0.338&0.606&0.349\\
10&0.051&0.082&0.040&5.0&0.313&0.573&0.323
\end{tabular}}
\caption{\textit{The values in this table correspond to the left and right plot in Figure \ref{fig:HT2}. The abbreviations are as in Table \ref{tab:HT1}.}}
\label{tab:HT2}
\end{table}}

In addition, we  estimated the probabilities by simulation. The results are gathered in Tables \ref{tab:HT1} and \ref{tab:HT2}, and are plotted in Figs.\ \ref{fig:HT1} and \ref{fig:HT2}. Observe from Fig.\ \ref{fig:HT1} that the Regime II approximation is substantially more accurate than the Regime I approximation when $\rho_2$ is high (in this case $\rho_2=0.99$). By comparing the two plots in Fig.\ \ref{fig:HT1}, we see that increasing $\rho_1$ negatively affects the performance of the Regime I   approximation.  Figure \ref{fig:HT2} shows that the Regime II approximation  works remarkably well even when relatively low loads are imposed on both servers.
Our experiments reveal that it is only reasonable to use Regime~I approximations in a tandem queue when the load of the first server $\rho_1$ is low; in all other cases it is outperformed by the Regime II approximation. If $\rho_1$ is high, then there is a stronger dependence between the up- and downstream workloads (cf. Eqn.\ \eqref{eq:cg}, noting that $\rho_1$ increases as $\g$ decreases). Apparently, the dependence between both workloads, which is ignored in Regime I, has a crucial impact.
\end{example}

\section{Heavy-tailed input}
\label{sec:infvar}
In this section we consider spectrally-positive L\'evy input processes with {increments with infinite variance}. Unlike in the finite variance case, the precise form of the heavy-traffic limit depends on the specific features of the L\'evy input process. In Section \ref{sec:CPHT} we consider compound Poisson input with heavy-tailed jobs, and in Section \ref{sec:alpha} we consider $\a$-stable L\'evy input (where $1<\a<2$). Note that $\a$-stable L\'evy motion can be regarded as a generalization of Brownian motion. Indeed, for $\a=2$, an $\a$-stable L\'evy motion reduces to a Brownian motion.

\begin{remark}
We only consider Regime I results, because we have not managed to compute Regime II results here. In the finite variance case, we relied on the existence of the inverse function of $\phi$ in the Brownian case, to construct the $\psi(s\e(r_1-r_2))$ as in Lemma \ref{lem:psiK}. However, for heavy-tailed input, there is in general no inverse function of $\phi$ available, except for some special cases, such as $\f32$-stable L\'evy motion. 
\end{remark}

Before we state the result, we introduce some notation. We denote 
\[
E_{\alpha}(z) = \sum_{n=0}^\infty \f{z^n}{\Gamma(\alpha n + 1)}
\]
for the Mittag-Leffler function with parameter $\a$. Random variables that have a distribution function $1-E_\a(x)$ are called {\it Mittag-Leffler distributed} with parameter $\a$. {Suppose that $M$ is Mittag-Leffler distributed with parameter $\a$, then the LST of $M$ is given by
\[
\E e^{- sM} = \f{1}{1+ s^{\a}}.
\]
}
Furthermore, suppose a measurable function $L$ defined on some neighborhood $[X,\infty)$ of $\infty$, satisfies
\[
\lim_{x\to\infty} \f{L(ax)}{L(x)} = 1,\quad \forall a>0,
\]
then it is called a slowly varying function \cite{RV}. For notational brevity we sometimes write $f(x)\sim g(x)$ (as $x\to\infty$), to denote $\lim_{x\to\infty} f(x)/g(x) =1$, for generic functions $f,g$.

{
\subsection{Compound Poisson}
\label{sec:CPHT}
In this section we consider spectrally positive compound Poisson input processes with heavy-tailed jobs.}

\begin{remark}
In \cite{Boxma1999} a heavy-traffic problem for heavy-tailed input was studied in a GI/G/1 setting. In their paper, the correct scaling function $\De$ was also found by letting it be the zero of an appropriate equation. We follow a similar approach.
\end{remark}

\begin{proposition}
\label{prop:ML}
Let the input process $J\in\mathcal{S}^+$ to the first queue be a compound Poisson process with heavy-tailed service requirements, that is, the {distribution of} the service requirement $B$ satisfies
\begin{equation}
\label{eq:BRV}
\Pb(B>x) \sim x^{-\nu} L(x), \quad \text{as} \quad x\to\infty,
\end{equation}
where $L$ is some slowly varying function. Suppose that the load of the first queue is fixed and the load of the second queue is increasing to one as $\e\downarrow0$. For $\e>0$ small enough, there is a unique solution $s=\De$ to
\[-\l\Gamma(1-\nu)\f{(r-\e)^\nu}{r^{\nu+1}} s_2^{\nu-1} L(1/s_2) = \e,\]
such that $\De\downarrow0$. It holds that 
\[
\lim_{\e\downarrow0} \E e^{-s_1 Q^{(1)}-s_2\De Q^{(2)}} = \f{r s_1}{\phi(s_1)}\cdot \f{1}{1+s_2^{\nu-1}}.
\]
\end{proposition}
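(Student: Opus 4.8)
The plan is to start from the one-dimensional transform for the downstream queue given in Corollary~\ref{thm:marginal}, namely
\[
\E e^{-s\,Q^{(2)}} = \f{-\E X_1^{(2)}}{r_1-r_2}\,\f{\psi(s(r_1-r_2))}{s-\psi(s(r_1-r_2))},
\]
and to substitute $s\mapsto s_2\De$, $r_1-r_2 = r-\e$, and $-\E X_1^{(2)} = r_2 - \E J_1 = \e$. Everything then hinges on an asymptotic expansion of $\psi(s_2\De(r-\e))$ as $\e\downarrow0$; this is the analogue of Lemma~\ref{lem:psigen} for the heavy-tailed case, and it is where the heavy-tailed structure (and the scaling function $\De$) enters. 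First I would record the behaviour of $\phi$ near the origin: since $J$ is compound Poisson with $\Pb(B>x)\sim x^{-\nu}L(x)$ and $1<\nu<2$, a standard Tauberian computation (cf.\ the Tauberian theorems in Appendix~\ref{app:rv}) gives
\[
\phi(s) = r s + \l\bigl(1-b(s)\bigr) = r s + \l\,\E B\, s - \l\Gamma(1-\nu)\,s^{\nu}L(1/s) + o\!\left(s^{\nu}L(1/s)\right),
\]
where I have used $\E B < \infty$ (so the linear term is exactly $\phi'(0)s = rs$, consistent with the generalized PK formula) and the $s^\nu L(1/s)$ term is the leading correction coming from the heavy tail. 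This is the heavy-tailed replacement for the $\tfrac12\s^2 s^2$ term in Lemma~\ref{lem:psigen}.

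Next I would invert this relation near zero, by the same validation technique used in the proof of Lemma~\ref{lem:psigen}: posit
\[
\psi(s_2\De(r-\e)) = s_2\De\cdot\f{1}{r} + (\text{correction}) ,
\]
and determine the correction by plugging into $\phi(\psi(\cdot)) = s_2\De(r-\e)$ and matching orders. The leading correction turns out to involve $\bigl(s_2\De\bigr)^{\nu}L(1/(s_2\De))$, and after dividing through by $s_2$ one sees that $s_2 - \psi(s_2\De(r-\e))$ is, to leading order, proportional to $\De - \psi(s_2\De(r-\e))\cdot(\text{const})$; crucially the defining equation for $\De$,
\[
-\l\Gamma(1-\nu)\f{(r-\e)^\nu}{r^{\nu+1}}\,s_2^{\nu-1}L(1/s_2) = \e,
\]
is exactly the choice that makes the $\e$ in the prefactor $-\E X_1^{(2)} = \e$ cancel against the heavy-tailed correction term, leaving a clean limit. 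Using slow variation of $L$ (so that $L(1/(s_2\De))/L(1/\De)\to 1$ as $\De\downarrow0$, uniformly in $s_2$ on compacts, which also needs a Potter-type bound) one obtains
\[
\lim_{\e\downarrow0}\f{\e}{s_2\De - \psi(s_2\De(r-\e))} = 1 + s_2^{\nu-1},\qquad \lim_{\e\downarrow0}\f{\psi(s_2\De(r-\e))}{\De} = \f{s_2}{r},
\]
whence $\E e^{-s_2\De Q^{(2)}} \to 1/(1+s_2^{\nu-1})$. For the joint statement I would feed the same expansion of $\psi(s_2\De(r-\e))$ into Thm.~\ref{thm:joint}: the second factor $\bigl(\psi(s_2\De(r-\e)) - s_1\bigr)/\bigl((r-\e)s_2\De - \phi(s_1)\bigr)$ converges to $-s_1/\!\bigl(-\phi(s_1)\bigr) = s_1/\phi(s_1) = s_1\phi'(0)/\phi(s_1)/r\cdot r$, i.e.\ $r s_1/\phi(s_1)$ after the $(r-\e)s_2\De\to0$ term drops out, while the first factor reproduces $1/(1+s_2^{\nu-1})$ as above, and these combine to give the asserted product form — in particular the upstream and downstream workloads decouple, with $Q^{(1)}$ retaining its exact (generalized PK) law.

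I also need the existence, uniqueness and vanishing of $\De$: for fixed small $\e>0$ the left-hand side of the defining equation, as a function of $s_2$, is (up to sign) $\l\Gamma(1-\nu)(r-\e)^\nu r^{-\nu-1}s_2^{\nu-1}L(1/s_2)$, which is regularly varying of index $\nu-1>0$ at $0$, hence continuous and $\to0$ as $s_2\downarrow0$ and eventually exceeding $\e$; monotonicity up to the slowly varying factor gives a solution, and one argues $\De\downarrow0$ as $\e\downarrow0$ directly from the equation (if $\De$ stayed bounded away from $0$ the left side would stay bounded away from $0$ while the right side $\e\to0$). The main obstacle, and the step requiring the most care, is the uniform control of the slowly varying function inside the limit: I must pass $L(1/(s_2\De))$ to $L(1/\De)$ and cancel it against the $L$ hidden in the definition of $\De$, uniformly for $s_2$ in a neighbourhood of each fixed point, and this is exactly what the Tauberian/Potter-bound machinery of Appendix~\ref{app:rv} is for. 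Everything else is bookkeeping of orders in $\e$, entirely parallel to the proof of Proposition~\ref{prop:prop1}.
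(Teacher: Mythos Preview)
Your proposal is correct and follows essentially the same route as the paper: Tauberian expansion of $\phi$ near the origin, inversion to obtain the leading behaviour of $\psi$, substitution into Thm.~\ref{thm:joint}, and use of the defining equation for $\De$ together with slow variation of $L$ to extract the limit. The only cosmetic differences are that the paper quotes Lemma~\ref{lem:9.2} directly for the expansion of $\psi$ (rather than redoing the validation argument of Lemma~\ref{lem:psigen}) and appeals to Thm.~1.5.4 of \cite{RV} for the asymptotic monotonicity needed in the uniqueness of $\De$.
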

\begin{proof}
Suppose the input process $J$ is of the compound Poisson type. More precisely, we have a Poisson process $N$ with rate $\lambda$ and we assume
\[
J_t = \sum_{k=1}^{N(t)} B_k, \text{ where the $B_k$ are i.i.d., independent of $N(t),$ and such that } \E J_1 = 1.
\]
Then the cumulative net input processes for the first server and the whole system ($i=1,2$ respectively) are defined by
\[
X_t^{(i)} = \sum_{k=1}^{N(t)} B_k - r_i t.
\]

Suppose we have a compound Poisson input process, then  
$
\phi(s) = sr_1 - \l + \l b(s),
$
where $b(s) = \E e^{-s B}$ (cf.\ Eqn.\ \eqref{eq:laplace exponent}). Suppose the service time $B$ is regularly varying, with index $1<\nu<2$. Then it takes the form of Eqn.\ \eqref{eq:BRV}. 
By applying Thm.\ \ref{thm:RV},  
\[
b(s) - 1 - c_1 s \sim  - \Gamma(1-\nu)  s^\nu L(1/s) \quad\text{as}\quad s\downarrow0,
\]
with $1<\nu<2$. Substitution yields
$\phi(s) \sim (\l c_1 + r_1)s - \l\Gamma(1-\nu) s^\nu L(1/s)$. We assumed $\l \E B=1$, so $b'(0) = -\f1\l = c_1$. Recall that $r_1-1=r$, and therefore
\[
\phi(s) - rs \sim  - \l \Gamma(1-\nu) s^\nu L(1/s). 
\]
By Lemma 9.2 from \cite{Mandjes} (see also Lemma \ref{lem:9.2} in Appendix \ref{app:rv}), we find
\begin{equation}
\label{eq:heavytailpsi}
\psi(s)-\f{1}{r}s \sim   \l \Gamma(1-\nu) \f{1}{r^{\nu+1}} s^\nu L(1/s)\quad\text{as}\quad s\downarrow0.
\end{equation}
We now {identify} a scaling function $\Delta(\e)$ such that we have convergence to a non-degenerate distribution. By making use of Eqn.\ \eqref{eq:heavytailpsi} and by scaling the workload of the downstream queue by a function $\Delta(\e)$, for which $\De\downarrow0$ as $\e\downarrow0$, we obtain
\begin{align}
\label{eq:lstde}
\E e^{-s_1Q^{(1)}-s_2\De Q^{(2)}} &\sim 
\f{1}{1 + \f1\e C (r-\e)^\nu s_2^{\nu-1}\De^{\nu-1} L(\f{1}{s_2\De})} \\
&\cdot \f{(r-\e) s_2\De - C s_2^\nu (r-\e)^\nu \De^\nu L(\f{1}{s_2\De(r-\e)})- rs_1}{(r-\e)s_2\De -\phi(s_1)},\nonumber
\end{align}
where $C:=-\l\Gamma(1-\nu){r^{-\nu-1}}$, for $s_1,s_2\geq0$ fixed and $\e\downarrow0$. Consider the equation
\begin{equation}
\label{eq:no3}
C(r-\e)^\nu s_2^{\nu-1} L(1/s_2) = \e.
\end{equation}
We will show that this equation has a unique zero for $\e$ close enough to zero, and we call the zero $\Delta(\e)$. Indeed, by Thm.\ 1.5.4 in \cite{RV}, we have that 
\[
C(r-\e)^{\nu} s^{\nu-1} L(1/s) \sim \xi(1/s),\quad s\downarrow0,
\]
where $s\mapsto\xi(s)$ is a non-decreasing function (hence $s\mapsto \xi(1/s)$ non-increasing). So if $\e$ is chosen small enough, the $s$ solving Eqn.\ \eqref{eq:no3} also becomes small and
\[
\f{C(r-\e)^\nu s^{\nu-1}L(1/s)}{\xi(1/s)} \xi(1/s) \approx \xi(1/s),
\]
so the left-hand side of Eqn.\ \eqref{eq:no3} is asymptotically monotone. This ensures that there is exactly one root $\Delta(\e)$ for all $\e>0$ small enough. Moreover, note that $\De$ indeed satisfies $\De\downarrow0$ as $\e\downarrow0$.

Therefore, we have
\begin{equation}
\label{eq:nr1}
C(r-\e)^\nu \De^{\nu-1} L(1/(s_2\De)) = \e.
\end{equation}
Now consider the first factor on the right-hand side in Eqn. \eqref{eq:lstde}. {Substituting} Eqn.\ \eqref{eq:nr1} into this factor,  
\[
\f{1}{1 + \f1\e C (r-\e)^\nu s_2^{\nu-1}\De^{\nu-1} L(\f{1}{s_2\De})} = \f{1}{1+s_2^{\nu-1} \f{L(1/(s_2\De))}{L(1/s_2)}}\stackrel{\e\downarrow0}{\longrightarrow}\f{1}{1+s_2^{\nu-1}},
\]
where we make use of the fact that $L$ is  slowly varying  at $\infty$. Now consider the following part of the second factor in Eqn.\ \eqref{eq:lstde}:
\begin{align*} 
C s_2^\nu (r-\e)^\nu \De^\nu L\Big(\f{1}{s_2\De(r-\e)}\Big) &= \left[ C(r-\e)^\nu \De^{\nu-1} L(\f{1}{s_2\De})\right] \De \f{L\Big(\f{1}{s_2\De(r-\e)}\Big)}{L\Big(\f{1}{s_2\De}\Big)}\\
&=\e\De\f{L\Big(\f{1}{s_2\De(r-\e)}\Big)}{L\Big(\f{1}{s_2\De}\Big)} \sim \e\De,
\end{align*}
where we substituted the part between square brackets by making use of Eqn.\ \eqref{eq:nr1}, and used that $L$ is slowly varying. By again exploiting the fact that $\De\downarrow0$ as $\e\downarrow0$, the result now follows from Eqn.\ \eqref{eq:lstde}.
\end{proof}

\begin{example}
\label{rem:HT}
Suppose that we are in the setting of Prop.\ \ref{prop:ML}, but {we are in the special case} that $\lim_{x\to\infty}L(x) = L\in\R$. Then {a} correct scaling function is
\[\De = \left(\f{\e}{\f{\l}{r} \Gamma(1-\nu) L}\right)^{\f{1}{\nu-1}}.\]

Prop.\ \ref{prop:ML} can be used to find a heavy-traffic approximation as follows. We have 
\[
\lim_{\e\downarrow0} \E e^{- s \De Q^{(2)}} = \f{1}{1+ s^{\nu-1}},
\]
so that,  for $x\geq0$, and  $\e>0$ small,
\[
\Pb(\De Q^{(2)}> x ) \approx E_{\nu-1}(-x^{\nu-1}).
\]
By substitution we {thus} obtain the heavy-traffic approximation for $x\geq0$, and  $\e>0$ small, \[
\Pb(Q^{(2)}> x) \approx E_{\nu-1}\Big(-{ \De^{\nu-1} x^{\nu-1}}\Big).
\]
\end{example}

\subsection{$\a$-stable L\'evy motion}
\label{sec:alpha}
In this subsection we prove the following result. {It entails that the workloads are asymptotically independent in the heavy-traffic limit, and that the marginals correspond to scaled Mittag-Leffler distributed random variables.}

\begin{proposition}
\label{prop:HTR1}
Let the input process $J\in\mathcal{S}^+$ to the first queue be a spectrally positive $\a$-stable L\'evy motion, with $1<\a<2$. Suppose that the load of the first queue is fixed and the load of the second queue is increasing as $\e\downarrow0$ and scaled by $\e^{\beta}$, with  $\beta:=({\a-1})^{-1}$. 
It holds that \[
\lim_{\e\downarrow0} \E e^{-s_1(C/r)^{\beta} Q^{(1)}-s_2(\e C)^{\beta}  Q^{(2)}} = \f{1}{1+s_1^{\a-1}}\cdot \f{1}{1+s_2^{\a-1}},
\]
with $C:= ({\cos(\pi(\f\a 2 -1))})^{-1}$. 
\end{proposition}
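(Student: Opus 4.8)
The plan is to proceed exactly as in the finite-variance Regime I analysis (Proposition \ref{prop:prop1}) and in Proposition \ref{prop:ML}, but using the Laplace exponent and its inverse for the $\a$-stable case. First I would record that for a spectrally-positive $\a$-stable L\'evy motion the Laplace exponent of $X^{(1)}_1 = J_1 - r_1 t$ has the form $\phi(s) = rs + C^{-1}\,s^{\a} (1+o(1))$ as $s\downarrow 0$, where $C = (\cos(\pi(\tfrac{\a}{2}-1)))^{-1}$ and the $rs$ term comes from $\phi'(0) = r_1 - \E J_1 = r$ as before. (The precise constant in front of $s^\a$ is exactly what fixes the constant $C$ in the statement; this is a standard computation for the stable LST.) Then, by the Tauberian-type inversion lemma (Lemma 9.2 of \cite{Mandjes}, i.e.\ Lemma \ref{lem:9.2}), the inverse function satisfies $\psi(s) - \tfrac1r s \sim - C^{-1} r^{-\a-1} s^{\a}$ as $s\downarrow 0$, mirroring Eqn.\ \eqref{eq:heavytailpsi} with $\nu$ replaced by $\a$ and the slowly varying factor absent.

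Next I would substitute into the two-dimensional PK formula of Thm.\ \ref{thm:joint} with $r_1 - r_2 = r - \e$, scaling $Q^{(1)}$ by $(C/r)^{\b}$ and $Q^{(2)}$ by $(\e C)^{\b}$ where $\b = (\a-1)^{-1}$. The role of the scaling $\e^{\b}$ on the downstream queue is to make the term $\psi(s_2(r_1-r_2)) - s_2 \cdot(\text{something})$ balance against the $\e$-sized drift gap: concretely, plugging $s_2 \mapsto s_2 (\e C)^{\b}$ into $s_2 - \psi(s_2(r_1-r_2))$ and using the expansion of $\psi$, the leading behaviour is a constant times $\e \cdot (\e C)^{\b}\cdot s_2^{\a-1}$ plus a term of the same order coming from the drift, so the first factor
\[
\f{-\E X_1^{(2)} s_2}{s_2 - \psi(s_2(r_1-r_2))}
\]
converges to $1/(1+s_2^{\a-1})$ after the dust settles — this is the analogue of the computation in the proof of Proposition \ref{prop:ML}, only with $\De$ replaced by the explicit power $(\e C)^{\b}$. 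For the second factor, $\psi(s_2(r_1-r_2)) - s_1(C/r)^{\b}$ over $(r_1-r_2)s_2 - \phi(s_1(C/r)^{\b})$: as $\e\downarrow 0$ the $\psi$-term and the $(r_1-r_2)s_2$-term vanish (they are $O(\e^{\b})$ and $O(\e^{1+\b})$ respectively relative to the surviving terms), leaving $-s_1(C/r)^{\b}$ over $-\phi(s_1(C/r)^{\b})$, and $\phi(s_1(C/r)^{\b}) = r s_1 (C/r)^{\b} + C^{-1} s_1^\a (C/r)^{\b\a}(1+o(1))$; after dividing through by $r s_1 (C/r)^{\b}$ and using $\b\a - \b = 1$ together with the definition of $C$, this collapses to $1/(1+s_1^{\a-1})$. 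Multiplying the two limits gives the claimed product form, which also exhibits the asymptotic independence.

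The main obstacle I expect is bookkeeping the orders of $\e$ cleanly: there are several terms — $\psi(s_2(r_1-r_2))$, $\phi(s_1(C/r)^\b)$, the bare drift $(r-\e)s_2(\e C)^\b$, and the $-\E X_1^{(2)} = \e$ prefactor — each carrying different powers of $\e$ (namely $\e^{\b}$, $1$, $\e^{1+\b}$, and $\e$), and one must verify that exactly the intended combinations survive in each of the two factors while everything else is genuinely $o$ of the surviving term, \emph{uniformly} is not needed here since $s_1,s_2$ are fixed. A secondary technical point is justifying the expansion $\phi(s) = rs + C^{-1}s^\a + o(s^\a)$ for the specific normalization of spectrally-positive $\a$-stable motion and checking that the constant is precisely $(\cos(\pi(\tfrac\a2-1)))^{-1}$; this I would either cite from a standard reference on stable laws or derive from the closed form of the $\a$-stable LST, and it is the step that pins down the constant $C$ appearing in the statement.
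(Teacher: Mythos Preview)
Your approach is correct and essentially identical to the paper's: record the stable Laplace exponent, invert it via Lemma~\ref{lem:9.2}, substitute into Thm.~\ref{thm:joint}, and take the limit factor by factor. Two small caveats: the coefficient of $s^\a$ in $\phi$ is $C$, not $C^{-1}$ (for the $\a$-stable case the paper has $\phi(s)=rs+Cs^\a$ \emph{exactly}, with no $o(1)$ correction needed), and the paper first computes the limit with the bare scaling $\e^{\b}$ on $Q^{(2)}$ to obtain $\tfrac{r}{r+Cs_1^{\a-1}}\cdot\tfrac{1}{1+Cs_2^{\a-1}}$, absorbing the remaining constants into $s_1,s_2$ only at the end, whereas you carry the full rescaling through from the outset --- either ordering works.
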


\begin{proof}[Proof of Proposition \ref{prop:HTR1}]
The Laplace exponent is given by
$
\phi(s) = (r_1-1)s + Cs^\a$. It follows by Lemma 9.2 from \cite{Mandjes}, that
\[
\psi(s) = c_1 + c_2 s - \f{C}{r_1-1} \Big(\f{s}{r_1-1}\Big)^\a + o(s^\a).
\]
We know that $\psi(0)=0$, hence $c_1=0$, and $\psi'(0) = \f{1}{\phi'(0)} = \f{1}{r_1-1}$, hence $c_2=\f{1}{r_1-1}$. This leads to
\[
\psi(s) = \f{s}{r_1-1}-\f{C}{r_1-1}\Big(\f{s}{r_1-1}\Big)^\a+ o(s^\a). 
\]

It follows from Thm.\ \ref{thm:joint}, that
\begin{align*} 
\E e^{-s_1 Q^{(1)} - \e^{\f{1}{\a-1}}s_2 Q^{(2)}}  =& 
\f{\e s_2 \e^{\f{1}{\a-1}}(1+o(s^\a\e^{\f{\a}{\a-1}}))}{\e^{\f{1}{\a-1}}s_2 - \e^{\f{1}{\a-1}}\f{s_2(r-\e)}{r} + \f Cr\Big(\f{s_2(r-\e)}{r}\Big)^\a \e^{\f{\a}{\a-1}}}\\
&\cdot\f{\f1r\e^{\f{1}{\a-1}}s_2(r-\e) - \f Cr\Big(\f{s_2(r-\e)}{r}\Big)^\a \e^{\f{\a}{\a-1}}-s_1}{\e^{\f{1}{\a-1}}(r-\e)s_2-rs_1-Cs_1^\a}. 
\end{align*}
Consequently,
\begin{equation}
\label{eq:astable}
\lim_{\e\downarrow0} \E e^{-s_1 Q^{(1)} - \e^{\f{1}{\a-1}}s_2 Q^{(2)}} = \f{r}{r+C s_1^{\a-1}}\f{1}{1+Cs_2^{\a-1}},
\end{equation}
{which implies the claim.}
\end{proof}

In case $\a=\f32$, $\psi$ can be calculated explicitly and the result can be obtained without the use of Tauberian theorems.  {We include it in the paper, as the calculations potentially contain clues as to how Regime II results can be eventually obtained}. 

\begin{example}[Explicit calculations for $\a=\f32$]
We assume a $\f32$-stable input process, so that the Laplace exponent is given by
\[
\phi(s) = r s + \f{1}{\cos(\pi(\f\a 2 - 1))} s^{\f32} = r s + \sqrt{2} s\sqrt{s}.
\]
Define
\begin{align} 
\label{eq:R}
R(s)&:=-\frac{r^3}{54 \sqrt{2}}+\sqrt{\f18 s^2 - \f{s r^3}{108}}+\frac{s}{2 \sqrt{2}}.
\end{align}
By making a substitution $s^2\leftarrow s$, $\phi$ turns into a third order polynomial., which can be inverted by using Cardano's formula. It follows that the inverse function of $\phi$ is given by
\begin{equation}
\label{eq:cardano}
\psi(s)=\left(R(s)^{\f13} + \f{r^2}{18R(s)^{\f13}} - \f{r}{3\sqrt{2}}\right)^2.
\end{equation}

Note that $s = \phi(\psi(s)) = r \psi(s) + \sq2 \psi(s)^{\f32}$. Define the function $\z$ such that  $\z(s)^2 = \psi(s)$. Then
$\psi(s) = \z(s)^2 = r^{-1}({s-\sq2 \z(s))^3}$. So 
\begin{equation} 
\label{eq:psizeta}
\psi( s\e^2(r-\e)) = \f1r  s\e^2(r-\e) - \f{\sq2}{r}\z( s\e^2(r-\e))^3.
\end{equation}
Now we can focus on 
\begin{equation}
\label{eq:zeta}
\z( s\e^2(r-\e)) = R( s\e^2(r-\e))^{\f13} + \f{r^2}{18R( s\e^2(r-\e))^{\f13}} - \f{r}{3\sqrt{2}},
\end{equation}
where we can simplify by constructing the Taylor series of $R^{\f13}$. First note that
\[
R( s\e^2(r-\e)) = -\f{r^3}{54\sq2} + \sq{\f18  s^2\e^4(r-\e)^2 - \f{ s\e^2(r-\e) r^3}{108}} + o(\e).
\]
By rewriting this and using Taylor expansions for the square roots, neglecting all terms of smaller order than $\e$, we obtain
\begin{align*} 
R( s\e^2(r-\e)) = -\f{r^3}{54\sq2} +\sq{\f{r^4 s}{108}}\e i \sq{1+o(\e)} +o(\e) = - \f{r^3}{54\sq2}(1-g\e)+o(\e),
\end{align*}
where $i$ denotes $\sq{-1}$ and we defined $g:=\f{3\sq6}{r}\sq{ s}i$. Again using a Taylor expansion, we find
\begin{align*} 
R( s\e^2(r-\e))^{\f13} &= (-1)^{\f13}\f{r}{3\sq2}\Big(1-\f13 g\e\Big)+ o(\e),\\
R( s\e^2(r-\e))^{-\f13} &= (-1)^{-\f13}\f{3\sq2}{r}\Big(1+\f13 g\e\Big)+ o(\e).
\end{align*}

Substituting this into Eqn.\ \eqref{eq:zeta}  yields
\begin{align*}
\zeta( s\e^2(r-\e)) &= \f{rg\e}{9\sq2}\Big(-(-1)^{\f13}+(-1)^{-\f13}\Big)=\f{rg\e}{9\sq2}(-\sq3 i)+ o(\e),
\end{align*}
by making use of $(-1)^{\f13} = e^{i\pi/3} = \f12 + \f12\sq3 i$ and $(-1)^{-\f13} = e^{-i\pi/3} = \f12 - \f12\sq3 i$. Recalling the definition of $g$, we find $\z( s\e^2(r-\e)) = \e\sq s + o(\e)$. Substituting this into Eqn.\ \eqref{eq:psizeta}, yields $\psi( s\e^2(r-\e)) =  s\e^2-(1+\sq{2 s})\f{ s\e^3}{r} + o(\e^3)$. It can be verified that terms of smaller magnitudes do not contribute to the heavy-traffic {version} of Corollary \ref{thm:marginal}. Using this corollary yields
\begin{align*}
\lim_{\e\downarrow0} \E e^{- s \e^2 Q^{(2)}} = \lim_{\e\downarrow0} \f{1-(1+\sq{2 s})\f{\e}{r} + o(\e)}{1+\sq{2 s} + o(1)} = \f{1}{1+\sq{2s}},
\end{align*}
which corresponds to Eqn.\ \eqref{eq:astable} with $\a=\f32$ (and here we considered $s_1=0$).
\end{example}

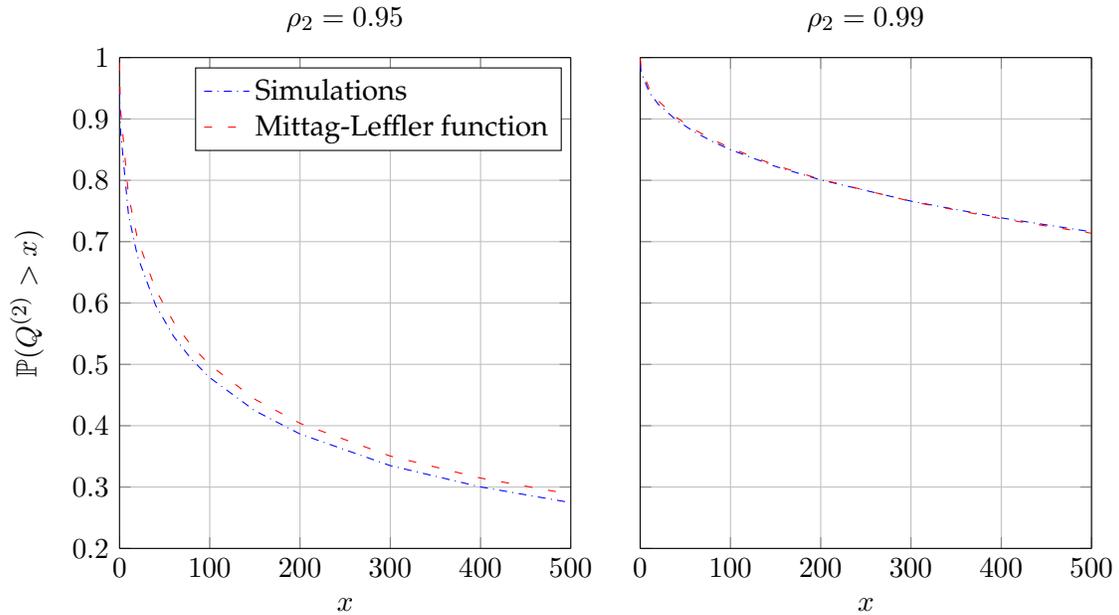
\begin{figure}[b!]
\centering
\begin{subfigure}{.5\textwidth}
  \centering
\setlength
\figureheight{7cm} 
\setlength
\figurewidth{6cm} 
%
%
\begin{tikzpicture}

\begin{axis}[%
width=\figurewidth,
height=0.933\figureheight,
at={(0\figurewidth,0\figureheight)},
scale only axis,
separate axis lines,
every outer x axis line/.append style={black},
every x tick label/.append style={font=\color{black}},
xmin=0,
xmax=500,
xtick={  0,   100,  200,  300,  400,  500},
xlabel={$x$},
xmajorgrids,
every outer y axis line/.append style={black},
every y tick label/.append style={font=\color{black}},
ymin=0.2,
ymax=1,
ytick={  0, 0.1,0.2,.3,.4,.5,.6,.7,.8,.9,1},
ylabel={$\Pb(Q^{(2)}>x)$},
ymajorgrids,
axis background/.style={fill=white},
title={$\rho_2=0.95$},
legend style={legend cell align=left,align=left,draw=black}
]
\addplot [color=blue,dashdotted]
  table[row sep=crcr]{%
0.01	0.948619065\\
1	0.883784084583333\\
10	0.743574465069444\\
20	0.676030963680556\\
40	0.596509844930556\\
60	0.545522366527778\\
80	0.507968260208333\\
100	0.478380575902778\\
150	0.424370277638889\\
200	0.386535795972222\\
300	0.334975131597222\\
400	0.300250771319444\\
500	0.274610677152778\\
};
\addlegendentry{Simulations};

\addplot [color=red,loosely dashed]
  table[row sep=crcr]{%
0.01	0.991353981044732\\
1	0.918571360936812\\
10	0.774854933245452\\
20	0.704685169136382\\
40	0.621933689399854\\
60	0.568886445056035\\
80	0.529864249073697\\
100	0.499139727756242\\
150	0.443130520297336\\
200	0.403922720758789\\
300	0.350568583520147\\
400	0.31468843732451\\
500	0.288266828143892\\
};
\addlegendentry{Mittag-Leffler function};

\end{axis}
\end{tikzpicture}%

\end{subfigure}%
\begin{subfigure}{.5\textwidth}
  \centering
\setlength
\figureheight{7cm} 
\setlength
\figurewidth{6cm} 
\begin{tikzpicture}

\begin{axis}[%
width=\figurewidth,
height=0.933\figureheight,
at={(0\figurewidth,0\figureheight)},
scale only axis,
separate axis lines,
every outer x axis line/.append style={black},
every x tick label/.append style={font=\color{black}},
xmin=0,
xmax=500,
xtick={  0, 100, 200,  300,  400,  500},
xlabel={$x$},
xmajorgrids,
every outer y axis line/.append style={black},
every y tick label/.append style={font=\color{white}},
ymin=0.2,
ymax=1,
ytick={  0, .1,.2,.3,.4,.5,.6,.7,.8,.9,1},
ylabel={},
ymajorgrids,
axis background/.style={fill=white},
title={$\rho_2=0.99$},
legend style={at={(0.9,0.2)},anchor=south east}
]
\addplot [color=blue,dashdotted]
  table[row sep=crcr]{%
0.01	0.9900711225\\
1	0.97684440325\\
10	0.943252485583333\\
20	0.9240350865\\
40	0.89800458775\\
60	0.878864200416667\\
80	0.863258550083333\\
100	0.849874396333333\\
150	0.822537581\\
200	0.800634638916667\\
300	0.766041672\\
400	0.738739646083333\\
500	0.7160614055\\
};

\addplot [color=red,loosely dashed]
  table[row sep=crcr]{%
0.01	0.998331496155076\\
1	0.983509867594701\\
10	0.949285369809861\\
20	0.929459595049025\\
40	0.902523610018661\\
60	0.882688030581952\\
80	0.866501600264381\\
100	0.852631635170353\\
150	0.824267321987494\\
200	0.801519175583235\\
300	0.765577221132344\\
400	0.737273381043008\\
500	0.713731148920678\\
};

\end{axis}
\end{tikzpicture}%
\end{subfigure}
\caption{\textit{Using the Mittag-Leffler function as an approximation. We simulated 288 sample paths each consisting of $50\cdot 10^6$ arrivals of Pareto distributed jobs. In both cases we used $\l=1$, $\nu=1.5$, $\rho_1=\f12$, and we only varied $\rho_2$ as indicated above the plots.}}
\label{fig:HTsimul}
\end{figure}

\begin{table}[ht!]
\centering
{\small
\begin{tabular}{r||c|c|c||c|c|c}
  & \multicolumn{3}{c||}{$\rho_2=0.95$} & \multicolumn{3}{c}{$\rho_2=0.99$}\\
$x$ & \textbf{Simul} & \textbf{M-L} & \textbf{diff} (\%)   & \textbf{Simul} & \textbf{M-L} & \textbf{diff} (\%)\\
\hline
10&0.744&0.775&4.2&0.943&0.949&0.64\\
20&0.676&0.705&4.3&0.924&0.929&0.54\\
40&0.597&0.622&4.2&0.900&0.903&0.33\\
60&0.546&0.569&4.2&0.879&0.883&0.46\\
80&0.508&0.530&4.3&0.863&0.867&0.46\\
100&0.478&0.499&4.4&0.850&0.853&0.35\\
150&0.424&0.443&4.5&0.823&0.824&0.12\\
200&0.387&0.404&4.4&0.801&0.802&0.12\\
300&0.335&0.351&4.8&0.766&0.766&0.00\\
400&0.300&0.315&5.0&0.739&0.737&-0.27\\
500&0.274&0.288&5.1&0.716&0.714&-0.28
\end{tabular}}
\caption{\textit{The $x$ indicates the size of the workload. The columns {\em {\bf Simul}} and {\em {\bf M-L}} show the probabilities that $\Pb(Q^{(2)}>x)$, for the simulated sample paths and the heavy-traffic approximation from Example \ref{rem:HT}, respectively. The last column shows the relative difference between the two values, that is, {\em {\bf diff}} equals ({\em {\bf M-L}} $-$ {\em {\bf Simul}})/ {\em {\bf Simul}}$\,\cdot 100\%$. }}
\label{tab:plot}
\end{table}

\subsection{Numerical heavy-traffic approximations}
Suppose the {tandem system is fed by} a compound Poisson input process {with} jobs that are {Pareto} distributed. In this case, the slowly varying function from Prop.\ \ref{prop:ML} is actually a constant. In Example \ref{rem:HT}, we obtained the corresponding heavy-traffic approximation.  {Fig.\ \ref{fig:HTsimul} facilitates a comparison  between estimates obtained from simulations and the Mittag-Leffler (Regime I) heavy-traffic approximation}. As expected, we see that as $\rho_2$ increases, the heavy-traffic approximation becomes more accurate, by comparing the left plot (where $\rho_2=0.95$) to the right plot (where $\rho_2=0.99$). We show the plotted values in Table \ref{tab:plot}, along with the relative difference between the two values.

\section{Discussion and concluding remarks}
In this paper we considered two types of heavy-traffic regimes for a two-node fluid tandem queue with {spectrally-positive} L\'evy input. In Regime I, only the second server experiences heavy traffic. In this case, the load of the first server has no influence on the steady-state distribution of the workload in the second server. In Regime II, where both servers experience heavy traffic, the dependence structure between both workloads is {preserved}. {In case the increments of the L\'evy input process have finite variance, we have obtained Regime I and II results, whereas for the infinite variance case we established Regime I results.}

The numerical {experiments} led to the  interesting insight  that (for finite-variance input processes) the Regime II approximation performs typically better than the Regime I approximation, particularly when the load of the first server is high as well. This leads us to wonder if results of this kind {carry over to} a more general setting. More specifically, when considering a more general fluid tandem network (e.g., a $n$-node tandem system) with multiple `bottlenecks', does Regime II provide better approximations than Regime I?

\vb

An open problem concerns Regime II results in case {the increments of the input process have infinite variance}. It is not clear how such results can be established. In the finite variance case we could define an inverse Laplace exponent that was in line with the exact inverse for Brownian motion.
However, in the case of heavy-tailed input, e.g.\ for $\a$-stable L\'evy motion, there is no explicit inverse Laplace exponent for all $1<\a<2$, and hence a fundamentally different approach {needs to be developed}.

Another direction for further research concerns weak convergence results at the path level (rather than the stationary workload that was considered in the present paper).

\section*{Acknowledgements}
The research for this paper is partly funded by the NWO Gravitation project NETWORKS, grant number 024.002.003. The research of Onno Boxma was also partly funded by the Belgian Government, via the IAP Bestcom project.

\begin{appendix}
\section{Useful Tauberian results}
\label{app:rv}
We turn to a law $F$ defined on $[0,\infty)$. We study the LST $\hat{F}$. Write
\[
\mu_n := \E X^n = \int_{[0,\infty)} x^n dF(x) \quad (n=0,1,\ldots)
\]
for the $n$-th moment. When $\mu_n<\infty$, $\hat{F}(s)$ may be expanded in a Taylor series as far as the $s^n$ term:
\[
\hat{F}(s) = \sum_{r=0}^n \mu_r(-s)^r / r! \quad (s\downarrow0).
\]
To relate the tail behaviour of $F$ to the behavior of $\hat{F}$ at zero, one needs to eliminate the polynomial $\sum_{r=0}^n \mu_r(-s)^r/r!$, which can be done by subtraction or differentiation. This leads to the following definitions:
\[
f_n(s) := (-1)^{n+1} \left(\hat{F}(s) - \sum_{r=0}^n \mu_r(-s)^r/r! \right),
\]
\[
g_n(s) := \f{d^n f_n(s)}{ds^n} = \mu_n - (-1)^n \hat{F}^{(n)}(s),
\]
thus $f_0(s) = g_0(s) = 1-\hat{F}(s)$. Now we are ready to state the following important theorem.

\begin{theorem}[Theorem 8.1.6 in \cite{RV}]
\label{thm:RV}
Let $L$ be a slowly varying function, $\mu_n<\infty$, where $n\in\mathbb{Z}^+$, and $\nu = n+\b$ with $0\leq\b\leq1$. Then the following are equivalent:
\begin{itemize}
\item $f_n(s) \sim s^\nu L(1/s)$ as $s\downarrow0$;
\item $1-F(x) \sim \f{(-1)^n}{\Gamma(1-\nu)}x^{-\nu} L(x)$ as $x\to\infty$ when $0<\beta<1$.
\end{itemize}
\end{theorem}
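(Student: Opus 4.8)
The plan is to reduce the stated equivalence, which superficially involves the somewhat awkward ``subtracted transform'' $f_n$, to a clean Tauberian statement about a \emph{monotone} auxiliary function, and then to invoke Karamata's theorem. The starting observation is that the $n$-th derivative $g_n = f_n^{(n)}$ admits a probabilistic representation: since $(-1)^n\hat{F}^{(n)}(s) = \int_0^\infty x^n e^{-sx}\,dF(x)$, we have
\[
g_n(s) = \mu_n - (-1)^n\hat{F}^{(n)}(s) = \int_0^\infty x^n\bigl(1-e^{-sx}\bigr)\,dF(x).
\]
Writing $1-e^{-sx} = s\int_0^x e^{-su}\,du$ and applying Fubini, this becomes $g_n(s) = s\int_0^\infty e^{-su}M(u)\,du$, where $M(u) := \int_{(u,\infty)} t^n\,dF(t)$ is non-increasing with $M(0)=\mu_n$. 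Thus $g_n(s)/s$ is exactly the Laplace transform of the monotone function $M$, which is the object Karamata's machinery is designed for.

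First I would transfer the hypothesis $f_n(s)\sim s^\nu L(1/s)$ (regular variation of index $\nu$ at the origin) to its $n$-th derivative, obtaining $g_n(s)\sim \f{\Gamma(\nu+1)}{\Gamma(\beta+1)} s^\beta L(1/s)$ with $\beta = \nu - n \in (0,1)$. Differentiation does not preserve regular variation in general, so this step rests on the \emph{monotone density theorem} (Thm.\ 1.7.2 in \cite{RV}) applied $n$ times; the required monotonicity of the successive derivatives is guaranteed because $\hat{F}$ is completely monotone, so the signs of the $\hat{F}^{(k)}$ are controlled and each $f_n^{(k)}$ is eventually monotone. Consequently $g_n(s)/s \sim \f{\Gamma(\nu+1)}{\Gamma(\beta+1)} s^{\beta-1} L(1/s)$, and since $\beta-1\in(-1,0)$ I can apply Karamata's Tauberian theorem (Thm.\ 1.7.1 in \cite{RV}) to the Laplace transform of $M$. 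This yields $\int_0^x M(u)\,du \sim \f{\Gamma(\nu+1)}{\Gamma(\beta+1)\Gamma(2-\beta)} x^{1-\beta} L(x)$, and a further application of the monotone density theorem, legitimate because $M$ is monotone, strips the integral to give
\[
M(x) \sim \f{\Gamma(\nu+1)}{\Gamma(\beta+1)\,\Gamma(1-\beta)}\, x^{-\beta} L(x),
\]
using $\Gamma(2-\beta) = (1-\beta)\Gamma(1-\beta)$.

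It then remains to pass from the weighted tail $M(x) = \int_x^\infty t^n\,dF(t)$ to the plain tail $\bar{F}(x) = 1-F(x)$. Integrating by parts (the boundary term $t^n\bar{F}(t)\to 0$ because $\mu_n<\infty$ and $\beta>0$) gives $M(x) = x^n\bar{F}(x) + n\int_x^\infty t^{n-1}\bar{F}(t)\,dt$; assuming tentatively $\bar{F}(x)\sim c\,x^{-\nu}L(x)$ and using Karamata's theorem on integrals of regularly varying functions, the integral contributes a factor $n/\beta$, so $M(x)\sim \f{\nu}{\beta}\,c\,x^{-\beta}L(x)$. Matching this with the displayed asymptotics for $M$ determines $c=\Gamma(\nu)/(\Gamma(\beta)\Gamma(1-\beta))$, and the reflection formula $\Gamma(\beta)\Gamma(1-\beta)=\pi/\sin(\pi\beta)$ together with $\sin(\pi\nu) = (-1)^n\sin(\pi\beta)$ collapses the constant to $c = (-1)^n/\Gamma(1-\nu)$, exactly as claimed. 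For the converse (Abelian) direction I would run the same chain backwards, replacing each Tauberian step by the corresponding Abelian half of Karamata's theorem, which holds without any additional monotonicity input.

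The main obstacle is the very first transfer, from the regular variation of $f_n$ to that of its $n$-th derivative $g_n$: differentiation is not an Abelian/Tauberian-safe operation, and everything downstream depends on it. Making it rigorous requires exploiting complete monotonicity of $\hat{F}$ to secure the monotonicity hypotheses of the density theorem at each of the $n$ intermediate orders, and it is also where the restriction $0<\beta<1$ becomes essential: at the endpoints $\beta\in\{0,1\}$ the index $\beta-1$ of the Laplace transform of $M$ hits the boundary of the range where Karamata's theorem produces a clean power law, so slowly varying corrections appear and the tail equivalence as stated no longer holds.
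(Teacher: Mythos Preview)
The paper does not prove this theorem: it is stated in Appendix~\ref{app:rv} purely as a citation of Theorem~8.1.6 in \cite{RV}, with no accompanying argument. There is therefore no ``paper's own proof'' to compare against; the result is imported wholesale as a known Tauberian tool.

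That said, your sketch is essentially the route taken in \cite{RV} itself: representing $g_n$ as $s$ times the Laplace transform of the monotone weighted tail $M(u)=\int_{(u,\infty)}t^n\,dF(t)$, invoking Karamata's Tauberian theorem on $M$, and then peeling off the weight to reach $\bar F$. Two comments. First, the step where you ``assume tentatively $\bar F(x)\sim c\,x^{-\nu}L(x)$'' and then match constants is circular as written: you are using the conclusion to evaluate the Karamata integral $\int_x^\infty t^{n-1}\bar F(t)\,dt$. The rigorous passage from $M(x)\sim C x^{-\beta}L(x)$ to $\bar F(x)\sim c x^{-\nu}L(x)$ requires an additional monotone-density or sandwich argument (e.g.\ bounding $\bar F(x)-\bar F(\lambda x)$ via $M(x)-M(\lambda x)$ and summing geometrically), not a consistency check. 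Second, your transfer from $f_n$ to $g_n=f_n^{(n)}$ by $n$ applications of the monotone density theorem is correct in spirit, but you should be explicit that each intermediate derivative $f_n^{(k)}$ is monotone near $0$ because it equals $\int_0^\infty x^k(1-e^{-sx})\,x^{n-k}\,dF(x)$ up to a polynomial correction, and complete monotonicity of $\hat F$ controls the signs; this is where the argument is most delicate and where \cite{RV} spends the real effort.
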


\begin{lemma}[Lemma 9.2 in \cite{Mandjes}]
\label{lem:9.2}
Suppose that for $s\downarrow0$,
\[
\phi'(s) \sim \sum_{i=0}^{n-1} c_i s^i + \eta s^{\nu-1} L(1/s),
\]
for some constants $c_0,\ldots,c_{n-1}$ with $\nu\in(n,n+1)$, and $L$ a slowly varying function. Then for $s\downarrow0$:
\[
\psi(s) \sim \sum_{i=0}^{n} \hat{c}_i s^i - \f\eta\nu \f{1}{(\phi'(0))^{\nu+1}} s^{\nu-1} L(1/s),
\]
for some constants $\hat{c}_0,\ldots,\hat{c}_n$.
\end{lemma}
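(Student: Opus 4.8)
The plan is to derive Lemma~\ref{lem:9.2} by passing from the expansion of $\phi'$ near the origin to one of $\phi$ through integration (Karamata's theorem), inverting the leading \emph{analytic} part of $\phi$ exactly, and then treating the regularly-varying remainder as a perturbation of that inverse.

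First I would integrate the hypothesis: since $\nu>n\ge1$ the fractional part $u^{\nu-1}L(1/u)$ of $\phi'$ is integrable at $0$, and Karamata's theorem (\cite{RV}) gives $\int_0^{s}u^{\nu-1}L(1/u)\,du\sim\nu^{-1}s^{\nu}L(1/s)$; combined with $\phi(0)=0$ this yields $\phi(s)=\phi_0(s)+\nu^{-1}\eta\,s^{\nu}L(1/s)+o\big(s^{\nu}L(1/s)\big)$, where $\phi_0(s):=\sum_{i=0}^{n-1}\f{c_i}{i+1}s^{i+1}$ is a polynomial with $\phi_0(0)=0$ and $\phi_0'(0)=c_0=\phi'(0)\neq0$. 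Hence $\phi$ is strictly monotone and continuous near $0$, so $\psi=\phi^{-1}$ exists there with $\psi(s)\sim s/\phi'(0)$. Since $\phi_0$ is a power series with nonzero linear term, its compositional inverse $\psi_0:=\phi_0^{-1}$ is analytic at $0$ with $\psi_0(0)=0$, $\psi_0'(0)=1/\phi'(0)$ and a Taylor series $\psi_0(s)=\sum_{i\ge1}\hat c_i s^{i}$; truncating it at order $n$ costs only $o\big(s^{\nu}L(1/s)\big)$, because $n+1>\nu$ and a power dominates any slowly varying factor.

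Next I would put $\psi(s)=\psi_0(s)+g(s)$ and pin down $g$ from $\phi(\psi(s))=s$. Taylor-expanding $\phi_0(\psi_0(s)+g(s))=\phi_0(\psi_0(s))+\phi_0'(\psi_0(s))g(s)+O(g(s)^2)=s+(\phi'(0)+O(s))g(s)+O(g(s)^2)$, and using $\psi_0(s)^{\nu}\sim(s/\phi'(0))^{\nu}$ and $L(1/\psi_0(s))\sim L(1/s)$ (slow variation), the relation $\phi(\psi(s))=s$ forces
\[
(\phi'(0)+O(s))\,g(s)+O(g(s)^2)=-\f{\eta}{\nu}\,\f{s^{\nu}L(1/s)}{(\phi'(0))^{\nu}}\big(1+o(1)\big),
\]
whence $g(s)\sim-\f{\eta}{\nu}\,(\phi'(0))^{-(\nu+1)}s^{\nu}L(1/s)$: this is the regularly-varying correction term displayed in the statement, the factor $1/\nu$ arising from Karamata's integration step and the power of $\phi'(0)$ from the chain rule. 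Because $s^{\nu}L(1/s)$ dominates the discarded analytic terms (orders $\ge n+1$) but is dominated by the retained polynomial (orders $\le n$), the two contributions do not interfere, and $\psi(s)=\psi_0(s)+g(s)$ reorganises into the asserted expansion.

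The main obstacle is the bookkeeping that makes all the cross-terms genuinely negligible: one has to verify that $O(g(s)^2)$, the $O(s)$-correction to $\phi_0'(\psi_0(s))$ multiplied by $g$, and the discrepancy between $L(1/\psi_0(s))$ and $L(1/s)$ are each $o\big(s^{\nu}L(1/s)\big)$. This rests on $\nu\in(n,n+1)$ together with the basic fact that $s^{\delta}/L(1/s)\to0$ for every $\delta>0$ --- a slowly varying function is beaten by any true power --- which is exactly what lets one compare polynomial-order errors with $s^{\nu}L(1/s)$; uniformity of these slow-variation estimates is supplied by Potter's bounds (\cite{RV}). With that secured the argument is routine, and as a consistency check I would re-derive the same expansion from $\psi'(s)=1/\phi'(\psi(s))$ followed by a single application of Karamata's theorem.
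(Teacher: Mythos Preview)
The paper does not supply a proof of this lemma: it is quoted verbatim from Mandjes' book as a reference result, so there is no ``paper's own proof'' to compare against. Your proposed argument --- integrate $\phi'$ via Karamata to obtain $\phi(s)=\phi_0(s)+\tfrac{\eta}{\nu}s^{\nu}L(1/s)(1+o(1))$, invert the analytic part $\phi_0$ exactly, and treat the regularly-varying piece as a first-order perturbation of that inverse --- is the natural route and is essentially how such inversion results are established; the error-tracking you outline (Potter bounds, $s^{\delta}\gg L(1/s)$ for any $\delta>0$, $g(s)^2=o(g(s))$) is exactly what is needed.

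One point worth flagging: your computation produces a remainder of order $s^{\nu}L(1/s)$, whereas the lemma as printed in the appendix displays $s^{\nu-1}L(1/s)$. Your exponent is the correct one. This is confirmed by both applications of the lemma in the body of the paper: in the proof of Proposition~\ref{prop:ML} one has $\phi(s)-rs\sim -\lambda\Gamma(1-\nu)s^{\nu}L(1/s)$ and the paper deduces $\psi(s)-\tfrac{1}{r}s\sim \lambda\Gamma(1-\nu)r^{-(\nu+1)}s^{\nu}L(1/s)$, and in Proposition~\ref{prop:HTR1} one has $\phi(s)=(r_1-1)s+Cs^{\alpha}$ and the paper obtains a correction $-\tfrac{C}{r_1-1}\bigl(\tfrac{s}{r_1-1}\bigr)^{\alpha}$. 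Both are of order $s^{\nu}$, matching your derivation; the $s^{\nu-1}$ in the quoted statement is evidently a transcription slip.
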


\end{appendix}

\bibliographystyle{apalike}
\bibliography{mybib}

\begin{thebibliography}{}

\bibitem[Bingham et~al., 1987]{RV}
Bingham, N.~H., Goldie, C.~M., and Teugels, J.~L. (1987).
\newblock {\em Regular Variation}.
\newblock Cambridge University Press, Cambridge (Cambridgeshire), New York.

\bibitem[Boxma and Cohen, 1999]{Boxma1999}
Boxma, O.~J. and Cohen, J.~W. (1999).
\newblock Heavy-traffic analysis for the ${GI}/{G}/1$ queue with heavy-tailed
  distributions.
\newblock {\em Queueing Systems}, 33(1-3):177 -- 204.

\bibitem[{D\c{e}bicki} et~al., 2007]{debdierol}
{D\c{e}bicki}, K., {Dieker}, A.~B., and {Rolski}, T. (2007).
\newblock {Quasi-product forms for {L}\'evy-driven fluid networks}.
\newblock {\em Mathematics of Operations Research}, 32(3):629--647.

\bibitem[D\c{e}bicki and Mandjes, 2015]{Mandjes}
D\c{e}bicki, K. and Mandjes, M. (2015).
\newblock {\em Queues and {L}\'evy Fluctuation Theory}.
\newblock Springer International Publishing.

\bibitem[Glynn, 1990]{gly}
Glynn, P. (1990).
\newblock Diffusion approximations.
\newblock {\em Handbooks on Operations Research \& Management Science 2, D.
  Heyman and M. Sobel (eds.)}, pages 145--198.

\bibitem[Harrison, 1978]{HAR}
Harrison, J.~M. (1978).
\newblock The diffusion approximation for tandem queues in heavy traffic.
\newblock {\em Advances in Applied Probability}, 10(4):886--905.

\bibitem[Kella, 1993]{Kella1993}
Kella, O. (1993).
\newblock Parallel and tandem fluid networks with dependent {L}\'evy inputs.
\newblock {\em Ann. Appl. Probab.}, 3(3):682--695.

\bibitem[Kella and Whitt, 1992]{kelwhi}
Kella, O. and Whitt, W. (1992).
\newblock A tandem fluid network with {L}\'evy input.
\newblock {\em Queueing and Related Models, I. Basawa and U. N. Bhat (eds.)},
  pages 112--128.

\bibitem[Kingman, 1962]{Kingman}
Kingman, J. F.~C. (1962).
\newblock On queues in heavy traffic.
\newblock {\em Journal of the Royal Statistical Society, Series B
  (Methodological)}, 24(2):383--392.

\bibitem[Prohorov, 1963]{pro}
Prohorov, V. (1963).
\newblock Transition phenomena in queueing processes.
\newblock {\em I. Litovsk. Mat. Sb. 3.}, pages 199--205.

\bibitem[Shneer and Wachtel, 2011]{shwa}
Shneer, S. and Wachtel, W. (2011).
\newblock Heavy-traffic analysis of the maximum of an asymptotically stable
  random walk.
\newblock {\em Th. Probab. Appl.}, 55:332--341.

\bibitem[Tak\'acs, 1962]{Takacs}
Tak\'acs, L. (1962).
\newblock {\em Introduction to the Theory of Queues.}
\newblock Oxford University Press, New York.

\bibitem[Whitt, 2002]{whwa}
Whitt, W. (2002).
\newblock {\em Stochastic-Process Limits: An Introduction to Stochastic-Process
  Limits and Their Application to Queues}.
\newblock Springer.

\end{thebibliography}

\end{document}